\newtheorem{thm}{Theorem}
\newtheorem{lem}[thm]{Lemma}
\theoremstyle{definition}
\newtheorem{defn}[thm]{Definition}
\theoremstyle{remark}
\newtheorem{rem}[thm]{Remark}
\newcommand{\refeq}[1]{\textup{(\ref{eq:#1})}}
\newcommand{\refthm}[1]{Theorem \ref{thm:#1}}
\newcommand{\reflem}[1]{Lemma \ref{lem:#1}}
\newcommand{\refdefn}[1]{Definition \ref{defn:#1}}
\newcommand{\refrem}[1]{Remark \ref{rem:#1}}
\numberwithin{equation}{section}
\numberwithin{thm}{section}
\newcommand{\refi}[1]{\textup{(\ref{i:#1})}}
\newcommand{\const}{C}
\newcounter{const}
\newcommand{\cnst}{\refstepcounter{const}\const_{\theconst}}
\newcommand{\clabel}[1]{\cnst\label{#1}}
\newcommand{\refc}[1]{\const_{\ref{c:#1}}} %\texup is incompatible with hyperre
\newcommand{\del}{\delta}
\newcounter{del}
\newcommand{\delcnst}{\refstepcounter{del}\del_{\thedel}}
\newcommand{\dlabel}[1]{\delcnst\label{#1}}
\newcommand{\refd}[1]{\del_{\ref{d:#1}}}
\newcommand{\qtext}{\quad\text}
\newcommand{\sm}{\setminus}
\newcommand{\grad}{\nabla}
\newcommand{\ve}{\varepsilon}
\newcommand{\vphi}{\varphi}
\newcommand{\bd}{\partial}         % boundary
\newcommand{\cl}[1]{\overline{#1}} % closure
\newcommand{\ol}[1]{\overline{#1}} 
\newcommand{\til}[1]{\tilde{#1}} 
\newcommand{\wtil}[1]{\widetilde{#1}} 
\newcommand{\str}[1]{{#1}^*} 
\newcommand{\R}{{\mathbb R}}
\newcommand{\C}{{\mathbb C}}
\newcommand{\D}{D}      % disk
\newcommand{\U}{U}      % upper half-disk
\DeclareMathOperator{\dist}{dist}
\DeclareMathOperator{\diam}{diam}
\newcommand{\inv}{^{-1}}
\newcommand{\normi}[1]{\|#1\|_\infty} % Linfty norm
\newcommand{\normL}[1]{\|#1\|_{{\rm{Lip}}}} % Lipschitz norm 
\newcommand{\pder}[2]{\if@display\dfrac{\partial#1}{\partial#2}
\else\partial#1/\partial#2\fi}
\newcommand{\union}{\mathop{
\if@display
\bigcup\else\operatorname{\hbox{\small$\bigcup$}}
\fi}}
\newcommand{\Dom}{D}
\newcommand{\Bdy}{\bd\dom}
\newcommand{\dom}{\Omega}
\newcommand{\bdy}{\bd\Omega}		    %% another domain
\begin{document}

\title[The growth of the vorticity gradient]
{The growth of the vorticity gradient for the two-dimensional Euler
flows on domains with corners}

\author{Tsubasa Itoh}
\address{
Department of Mathematics,
Tokyo Institute of Technology,
Oh-okayama Meguro-ku Tokyo 152-8551, Japan
}
\email{tsubasa@math.titech.ac.jp}

\author{Hideyuki Miura}
\address{
Graduate School of Information Science 
and Engineering Mathematical and Computing Sciences, 
Tokyo Institute of Technology,
Oh-okayama Meguro-ku Tokyo 152-8551, Japan
}
\email{miura@is.titech.ac.jp}

\author{Tsuyoshi Yoneda}
\address{
Department of Mathematics,
Tokyo Institute of Technology,
Oh-okayama Meguro-ku Tokyo 152-8551, Japan
}
\email{yoneda@math.titech.ac.jp}

\subjclass[2010]{35Q31,76B03}
\keywords{two-dimensional Euler equation, vorticity gradient growth, hyperbolic flow, Green function}

\begin{abstract}
We consider the two-dimensional Euler  equations 
in non-smooth domains with corners. 
It is shown that  if the angle of the corner $\theta$ is strictly less than $\pi/2$, 
the Lipschitz estimate of the vorticity at the corner is 
at most single exponential growth and the upper bound is sharp.
%near the stagnation point.
For the corner with the larger angle $\pi/2 < \theta <2\pi$, 
$\theta \neq \pi$, 
we construct an example of the vorticity which 
loses continuity instantaneously. 
For the case $\theta \le \pi/2$, 
the vorticity remains continuous inside the domain.
We thus identify the threshold of the angle for the vorticity maintaining the continuity.
For the borderline angle $\theta=\pi/2$, it is also shown that the growth rate of 
the Lipschitz constant of the vorticity 
can be double exponential, which is the same as in Kiselev-Sverak's result 
(Annals of Math., 2014).
\end{abstract}

\maketitle

%%%%%%%%%%%%%%%%%

%\tableofcontents

%\listoffigures

%\baselineskip18pt  %% wide line space
%\parindent18pt     %% hanging at the paragraph

\section{Introduction}

Let $\dom$ be a two-dimensional domain. We 
are concerned with the Euler equations in $\Omega$ in the vorticity formulation:
\begin{equation}\label{eq:Euler}
\omega_t+(u\cdot\grad)\omega=0,
\quad \omega(x,0)=\omega_0(x).
\end{equation}
Here $\omega$ is the fluid vorticity, and 
$u$ is the velocity 
of the flow determined by the Biot-Savart law. 
We impose the no flow condition for the velocity at the boundary: 
$u \cdot n = 0$ on $\bdy$, 
where $n$ is the unit normal vector on the boundary.
This implies the formula:
\begin{equation}\label{eq:u}
u(x,t)=\grad^{\bot}\int_{\dom}G_{\dom}(x,y)\omega(y,t)dy,
\end{equation}
where $G_{\dom}$ is the Green function for the Dirichlet problem in $\dom$ 
and $\grad^{\bot}=(\partial_{x_2},-\partial_{x_1})$.
The movement of a fluid particle, 
placed at a point $X\in \dom$, 
is defined as the solution of the Cauchy problem
\begin{equation}\label{eq:trajectory}
\frac{d\gamma_X(t)}{dt}=u(\gamma_X(t),t),\quad\gamma_X(0)=X,
\end{equation}
and the vorticity $\omega$ is advected by
\begin{equation}\label{eq:vorticity}
\omega(x,t)=\omega_0(\gamma_x^{-1}(t)).
\end{equation}

Global regular solutions to the Euler equatins \eqref{eq:Euler} in smooth bounded domains 
were proved by Wolibner \cite{W} and H\"{o}lder \cite{H}
and there are huge literature on this problem.
Recently, there are growing interests in the study of \eqref{eq:Euler}  
in nonsmooth domains. Existence of  global weak solutions, 
with $u\in L^{\infty}(\R_+;L^{2}(\dom))$ 
and $\omega\in L^{\infty}(\R_+ \times \dom)$, 
was proved by Taylor \cite{Tay} for convex domains 
and by G\`erard-Varet and Lacave \cite{GL}
for more general (possibly not convex) domains.
Uniqueness of the solution to the Euler equations \refeq{Euler} on domains 
with corners was shown
by Lacave, Miot and Wang \cite{LMW} for acute angles.
For obtuse corners, 
Lacave \cite{L} proved uniqueness of the solution 
under the assumption that  the support of
the vorticity never intersects the boundary.
We are concerned with the question 
how fast the maximum of the gradient of the vorticity can grow 
as $t \rightarrow \infty$. 
When $\dom$ is a smooth bounded domain, 
the best known upper bound on the growth is double-exponential \cite{Y},
while the question whether such upper bound is sharp 
had been open for a long time. 
In 2014, 
Kiselev and Sverak \cite{KS} answered the question affirmatively
for the case $\dom$ is a disk. 
They gave an example of the solution growing with double exponential rate.
For a general domain with $C^3$-boundary see \cite{Xu}.
On the other hand, Kiselev and Zlatos \cite{KZ} considered
the 2D Euler flows on some bounded domain with certain cusps.
They showed that the gradient of vorticity blows up at the cusps 
in finite time. 
These solutions are constructed by imposing certain symmetries on 
the
initial data, which leads 
to a \textit{hyperbolic flow scenario} 
near a stagnation point on the boundary. 
More precisely, 
by the hyperbolic flow scenario,
particles on the boundary (near the stagnation point) head for the stagnation point for all time.
Moreover the relation between this scenario and 
the geometry of the boundary plays a crucial role in 
the double exponential growth or the formation of the singularity.  
Thus it would be an interesting question to ask how the geometry of the 
boundary affects the growth of the solution. 
In \cite{IMY} the authors considered 
the Euler equations \refeq{Euler} on the unit square
and under a simple symmetry condition 
the growth of 
the Lipschitz constant of
the vorticity
on the boundary is shown to be 
at most single exponential at the stagnation point.
In this paper, we are concerned with more general cases;
the growth of the Lipschitz norm of the vorticity
in  bounded domains with  general corners.

\begin{defn}\label{defn:corner}~(i)~
Let $\dom\subset\R^{2}$ be a simply connected bounded domain $0<\theta <2\pi$ with $\theta \neq \pi$.
We say that $\bdy$ has 
a corner of angle $\theta$ $(0<\theta<2\pi)$ at
$\xi\in\bdy$,
if there exist constants $r_0>0$ and $0\le\theta_0<2\pi$ such that,
$\dom\cap B(\xi,r_0)
=\{x=(x_1,x_2):\theta_0 <\arg(x-\xi)<\theta_0+\theta \}\cap B(\xi,r_0)$.
\\
(ii)~Let $\Omega$ be a domain with corners given in (i). We say $\Omega$ is symmetric 
with respect to the corner
if $\theta_0=-\frac{\theta}{2}$ and  $\Omega$ is symmetric along the $x_1$-axis.

\end{defn}

Without loss of generality, by translation, rotation and scaling, we may assume that
\begin{equation}\label{eq:domain}
\begin{cases}
&\text{$\diam(\dom)<1$ and $0\in \bdy$,}\\
&\text{$\bdy$ has a corner of angle $\theta$ at $0$ with $\theta_0=0$ in \refdefn{corner}}.
\end{cases}
\end{equation}

We now focus on the growth of the Lipschitz constant of $\omega$ with 
$a\in\cl\dom$
\[
\sup_{x\in\cl\dom}
\frac{|\omega(x,t)-\omega(a,t)|}{|x-a|}.
\]

Our first result concerns the domain with the corner with 
the angle $\theta \le \pi/2$. 

\begin{thm}\label{thm:acute}
Let  $\dom$ be a simple connected domain satisfying \refeq{domain} 
and $\omega_0$ be a Lipschitz function. 
\\
(a)~For $0<\theta<\frac{\pi}{2}$, 
there exists a constant $\const>0$ depending only on $\dom$
such that  
\begin{equation}\label{eq:uppersingle}
\sup_{x\in\dom}
\frac{|\omega(x,t)-\omega(0,t)|}{|x|}
\le \normL{\omega_0}e^{\const \normi{\omega_0}t}
\qtext{ for $t>0$}.
\end{equation}
Moreover there exist an initial data $\omega_0$ 
and a constant $\const>0$ such that
\begin{equation}\label{eq:lowersingle}
\sup_{x\in\dom}
\frac{|\omega(x,t)-\omega(0,t)|}{|x|}
\ge
\const e^{\const t}
\qtext{ for $t>0$}.
\end{equation}
(b)~For $\theta=\frac{\pi}{2}$, there exists an initial data $\omega_0$ with $\normL{\omega_0}>1$ such that
\begin{equation}\label{eq:lowerdouble}
\sup_{x\in\dom}
\frac{|\omega(x,t)-\omega(0,t)|}{|x|}
\ge
\normL{\omega_0}^{\const\exp(\const t)}
\qtext{ for $t>0$}.
\end{equation}
(c)~If $\bdy$ is $C^{1,1}$ except at $0\in\bdy$, 
then there exists a constant $\const$ depending only on $\dom$
such that  
\begin{equation}\label{eq:Holder}
|\omega(x,t)-\omega(y,t)|\le \normL{\omega_0} 
|x-y|^{\exp(-\const\normi{\omega_0}t)}
\qtext{ for $x,y\in\dom$ and $t>0$}.
\end{equation}
\end{thm}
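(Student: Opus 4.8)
The plan is to transfer the problem to a fixed reference domain by a conformal map and then run Gr\"onwall-type estimates on the flow. For part~(a)--(c) the common first step is to use the Riemann map $\Phi:\dom\to\mathbb H$ (or a suitable sector/half-disk model) sending the corner $0$ to $0$; near the corner $\Phi$ behaves like $z\mapsto z^{\pi/\theta}$, so the pulled-back velocity field and Green function acquire explicit power-law weights. The key quantitative input is a bound on the velocity near the corner: since $\omega\in L^\infty$ with $\normi{\omega}\le\normi{\omega_0}$ (transport), the Biot--Savart law \refeq{u} together with the corner geometry gives $|u(x,t)|\le \const\normi{\omega_0}\,|x|\,\big(1+\log^+\tfrac1{|x|}\big)^{?}$ when $\theta<\pi/2$ — the crucial point being that for $\theta<\pi/2$ the exponent $\pi/\theta>2$ forces $|u(x)|\lesssim |x|^{\min(1,\pi/\theta-1)}=|x|$, i.e.\ a genuine Lipschitz (not merely log-Lipschitz) bound on $u$ at the stagnation point. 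This is exactly the dichotomy that separates (a) from (b),(c): at $\theta=\pi/2$ one only gets $|u(x)|\lesssim|x|\log\frac1{|x|}$, which degrades Gr\"onwall from single to double exponential.

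For the upper bound \refeq{uppersingle}: let $d(t)=\sup_{x\in\dom}|\omega(x,t)-\omega(0,t)|/|x|$. Using \refeq{vorticity}, $\omega(x,t)-\omega(0,t)=\omega_0(\gamma_x^{-1}(t))-\omega_0(\gamma_0^{-1}(t))=\omega_0(\gamma_x^{-1}(t))-\omega_0(0)$ since $0$ is a fixed point of the flow (it is a boundary corner, hence stationary). Then $|\omega(x,t)-\omega(0,t)|\le\normL{\omega_0}\,|\gamma_x^{-1}(t)|$, so it suffices to control $|\gamma_x^{-1}(t)|/|x|$, i.e.\ how far a particle starting near the corner can travel. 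From $\frac{d}{dt}|\gamma_x(t)|\le|u(\gamma_x(t),t)|\le \const\normi{\omega_0}|\gamma_x(t)|$ we get $|\gamma_x(t)|\le|x|e^{\const\normi{\omega_0}t}$ and, running the ODE backward, $|\gamma_x^{-1}(t)|\le|x|e^{\const\normi{\omega_0}t}$ as well. Combining gives \refeq{uppersingle}. The main technical obstacle here is proving the Lipschitz bound $|u(x,t)|\le\const\normi{\omega_0}|x|$ uniformly in $t$: one splits the Biot--Savart integral into the part with $|y|\le 2|x|$ (handled by the corner estimate on $\grad_x G_\dom$, using $\theta<\pi/2$) and $|y|>2|x|$ (where $|\grad_x G_\dom(x,y)|$ stays bounded because $x$ is away from the diagonal, and the corner only helps); the decay rate $\pi/\theta-1>1$ is what makes the near-corner piece summable with the right power of $|x|$.

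For the lower bound \refeq{lowersingle} (and \refeq{lowerdouble} at $\theta=\pi/2$): impose the symmetry of \refdefn{corner}(ii) and choose $\omega_0$ odd in $x_2$ and suitably arranged so that the flow near the corner is hyperbolic with a stagnation point at $0$ — particles on the upper edge are pushed toward the corner and particles on the $x_1$-axis are pushed away, exactly the Kiselev--Zlatos/Kiselev--Sverak scenario. Tracking a particle initially at distance $\rho$ from the corner on the stable manifold, the same ODE now run with a \emph{lower} bound $\frac{d}{dt}|\gamma^{-1}|\ge c|\gamma^{-1}|$ (for $\theta<\pi/2$) or $\ge c|\gamma^{-1}|\log\frac1{|\gamma^{-1}|}$ (for $\theta=\pi/2$) yields $|\gamma_x^{-1}(t)|\le|x|e^{-ct}$, resp.\ double-exponential contraction; choosing $x$ on the axis where $\omega_0$ has a nonzero Lipschitz profile then forces the difference quotient to grow at the stated rate. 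The hard part of the lower bound is establishing the one-sided velocity estimate \emph{near} the corner — that the hyperbolic structure persists for all time, i.e.\ that $u_1(x_1,0,t)\le -c x_1$ on a fixed neighborhood — which requires showing the odd symmetry is preserved (immediate from uniqueness on acute corners, \cite{LMW}) and a lower bound on the relevant Green-function kernel; this is the technical heart, and for $\theta=\pi/2$ it must be done keeping track of the logarithmic factor to match Kiselev--Sverak.

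For the Hölder statement \refeq{Holder}: when $\bdy$ is $C^{1,1}$ away from $0$, the conformal map and its inverse are Lipschitz away from the corner, so the only obstruction to continuity of $\omega$ is the behavior at $0$; but \refeq{uppersingle}-type control of the flow gives that $\gamma_x^{-1}(t)$ satisfies an \emph{osgood}/log-Lipschitz bound $|\gamma_x^{-1}(t)-\gamma_y^{-1}(t)|\le|x-y|^{\exp(-\const\normi{\omega_0}t)}$ (standard for log-Lipschitz velocity fields, and here the velocity is globally log-Lipschitz since $\theta\le\pi/2$ is the borderline case for $u$ being log-Lipschitz at the corner), and then \refeq{vorticity} transfers this modulus of continuity to $\omega$. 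The estimate is just the classical Euler-in-$C^\alpha$-with-shrinking-$\alpha$ argument, localized at the corner; the only new point is checking the log-Lipschitz bound on $u$ holds uniformly up to the corner for all $\theta\le\pi/2$, which follows from the Biot--Savart splitting above with the borderline exponent.
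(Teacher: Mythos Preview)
Your overall architecture---conformal map to a half-disk model, power-law velocity bound at the corner, Gr\"onwall on trajectories---is exactly the paper's, and your arguments for the upper bound \refeq{uppersingle} and for part~(c) match it closely: \reflem{uupper} and \reflem{logLip} supply precisely the estimates $|u(x,t)|\le \const\normi{\omega_0}|x|$ for $\theta<\pi/2$ and the global log-Lipschitz bound for $\theta\le\pi/2$, which are then fed into Gr\"onwall on $|\gamma_X(t)|$ just as you describe.

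The gap is in the lower-bound constructions for (a) and (b). You propose odd-in-$x_2$ initial data and the Kiselev--{\v S}ver{\'a}k symmetry scenario, relying on ``the odd symmetry is preserved (immediate from uniqueness).'' But the domain in \refeq{domain} has $\theta_0=0$: one edge lies on the positive $x_1$-axis and the other at angle $\theta$, so $\dom$ is \emph{not} symmetric across the $x_1$-axis, and odd-in-$x_2$ data are not preserved by the flow. Your construction therefore proves the lower bound only for symmetric domains (this variant is in fact mentioned in a remark after the theorem), not for the general $\dom$ in the statement. The paper sidesteps symmetry entirely: it takes strictly \emph{positive} data, $\omega_0(x)=|x|+1$ for (a) and $\omega_0(x)=\min\{|x|/\ve+1,2\}$ for (b). The condition $c_0:=\inf_{\dom}\omega_0>0$ alone suffices to force the one-sided estimate $u_1(x_1,0,t)\le -\const\, x_1$ (respectively $-\const\, x_1\log x_1^{-1}$) on the boundary segment $\{(x_1,0)\}\subset\bdy$ for all $t>0$, with no bootstrap and no structural hypothesis on $\dom$ beyond \refeq{domain}; this is \reflem{ulower}(a). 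Tracking a single boundary trajectory from $X=(X_1,0)$ then gives $\gamma_X^1(t)\le X_1 e^{-\const t}$ (resp.\ $\le \ve^{\exp(\const t)}$), and the quotient $|\omega_0(X)-\omega_0(0)|/\gamma_X^1(t)$ grows at the stated rate. So the fix is to replace your odd-data setup with this positivity mechanism.
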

\begin{rem}
The assertion (a) shows that if $\theta<\frac{\pi}{2}$,
then the growth of the Lipschitz constant at the corner 
of the vorticity
is at most single exponential 
and the upper bound is sharp. 
For the case $\theta=\frac{\pi}{2}$, one can see from (b) that 
there exists an initial data $\omega_0$ such that
the growth of the Lipschitz constant of the vorticity 
at the corner is at least double-exponential.
In our argument, we are imposing an infimum condition to the initial vorticity: $\inf_{x\in\dom}\omega_0>0$ (see \reflem{ulower}).
This condition makes the proof simpler.
Indeed, we do not need a bootstrapping argument as 
in the proof of \cite[Theorem 1.1]{KS} anymore.
The assertion (c) shows that the vorticity remains continuous in $\Omega$
although the H\"older exponent is decreasing in $t$.
It is likely that the solution is Lipschitz continuous in $\Omega$  
and the growth is at most exponential. We would like to address this issue elsewhere.
\end{rem}
\begin{rem}
For the case $\theta =\frac{\pi}{2}$, we could not figure out whether the upper bound is indeed double-exponential. 
In fact we are analyzing local behavior of the flow near the corner 
by using the conformal mapping 
and the Green function of the unit upper half-disk. 
For smooth domains, 
it follows $C^{1,\alpha}$-regularity of the velocity on $\cl \dom$, 
we can obtain the double exponential upper bound 
without using conformal mappings. 
See \cite[Theorem 2.1 and Proposition 2.2]{KS} for example.
\end{rem}

\begin{rem}
Assume $\Omega$ is symmetric with respect to the corner and 
$\omega_0(x_1,x_2)=-\omega_0(x_1,-x_2)$ in $x\in \Omega$.
Then, by Theorem \ref{thm:acute}, we can immediately see that  if $\theta \in (0,\pi)$, then its corresponding solution has also single exponential bound. In this point of view, 
Theorem \ref{thm:acute} can be considered as a generalization of \cite{IMY}.
To obtain the upper bound, we split the domain $\Omega$ into
$\Omega \cap \{x:x_2>0\}$, and just apply Theorem \ref{thm:acute} to the splitted domain  (with the half angle $\theta/2$ case). 
In this case we do not need the infimum condition $\inf_{x\in\Omega} \omega_0(x)>0$ (see Lemma \ref{lem:ulower}) anymore.
\end{rem}

We next consider the case $\theta >\pi/2$. 
In this case, we will see that the vorticity can lose continuity 
instantaneously.
\begin{thm}\label{thm:obtuse}
Let  $\dom$ be a simply connected bounded domain 
satisfying \refeq{domain}. If $\pi/2<\theta<\pi$, 
there are an initial data $\omega_0\in C(\cl\dom)$ 
and its solution $\omega$ 
such that $\omega(t)$ instantaneously loses continuity in space.
Furthermore, if $\pi<\theta<2\pi$ and  $\dom$ is symmetric with respect to the corner, 
there also exist $\omega_0\in C(\cl\dom)$ and its solution $\omega$ 
such that $\omega(t)$ instantaneously loses continuity.

\end{thm}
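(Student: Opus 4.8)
The plan is to exploit the local structure of the flow near the corner. For $\theta>\pi/2$ the velocity driving the flow near $0\in\bdy$ is only H\"older continuous up to the corner, with exponent $\pi/\theta-1<1$, and this sub-Lipschitz behaviour turns the corner into a stagnation point which \emph{absorbs boundary fluid in finite time}; using the resulting failure of the flow map to be a homeomorphism, one can produce an $\omega_0\in C(\cl\dom)$ whose solution is not continuous for $t>0$. The borderline exponent $\pi/\theta-1=1$ is exactly $\theta=\pi/2$, consistently with \refthm{acute}.

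Concretely, I would first analyse the flow near $0$. By \refeq{domain} the corner occupies the sector $\{0<\arg z<\theta\}$ near $0$; the conformal map $z\mapsto z^{\pi/\theta}$ flattens it, and together with the standard regularity of the Green function $G_\dom$ in corner domains this gives, with $\beta:=\pi/\theta$ and $z=re^{i\varphi}$, a stream-function expansion
\begin{equation*}
\psi(z,t):=\int_\dom G_\dom(z,y)\,\omega(y,t)\,dy=c(t)\,r^{\beta}\sin(\beta\varphi)+o(r^{\beta}),\qquad u=\grad^{\bot}\psi,
\end{equation*}
hence $u(z,t)=c(t)\beta\bar z^{\,\beta-1}+o(r^{\beta-1})$ near $0$ (so $u(0,t)=0$ and $0$ is a stagnation point, since $\beta>1$). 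The radial component of $u$ on the two edges $\varphi=0$ and $\varphi=\theta$ is then $\pm c(t)\beta r^{\beta-1}+o(r^{\beta-1})$, with opposite signs, so one edge $E_{\mathrm{in}}$ is an inflow edge. The coefficient $c(t)=\int_\dom b(y)\,\omega(y,t)\,dy$ is a nontrivial linear functional of $\omega(\cdot,t)$ depending continuously on $t$, so I would pick $\omega_0$ with $c(0)\neq0$; then $c$ keeps a fixed sign on some $[0,\delta]$. Since $\bdy$ is invariant, a particle on $E_{\mathrm{in}}$ stays on it near $0$, and its distance $r$ to $0$ satisfies $\dot r=-a(t)r^{\beta-1}(1+o(1))$ with $a(t)=\beta|c(t)|>0$; as $\beta-1<1$, one has $\tfrac{d}{dt}r^{2-\beta}\le-\tfrac12(2-\beta)a(t)<0$ once $r$ is small, so $r$ hits $0$ in finite time. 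Running this backwards, a trajectory ending at time $t_1$ at a point $x\in E_{\mathrm{in}}$ with $|x|$ small started from $\gamma_x^{-1}(t_1)$ with $|\gamma_x^{-1}(t_1)|^{2-\beta}\ge\tfrac12(2-\beta)\int_0^{t_1}a(s)\,ds=:\rho_*(t_1)^{2-\beta}>0$, independently of $|x|$.

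It remains to convert this into a loss of continuity. The finite-time absorption means that $\Phi_{t_1}$ collapses the piece of $E_{\mathrm{in}}$ within distance $\rho_*(t_1)$ of $0$ to a point, so it is not a homeomorphism near the corner. Exploiting this, I would choose $\omega_0\in C(\cl\dom)$ so that, in addition to $c(0)\neq0$, $\omega_0$ is arranged near $0$ (using the freedom in the far-field vorticity, which makes $0$ a hyperbolic stagnation point of the type discussed in the introduction) so that at time $t_1$ the fluid occupying a one-sided neighbourhood of the corner carries values separated from $\omega_0(0)$: concretely, $\omega(x,t_1)=\omega_0(\gamma_x^{-1}(t_1))$ stays bounded away from $\omega_0(0)$ as $x\to0$ along $E_{\mathrm{in}}$ (since $\gamma_x^{-1}(t_1)$ remains at distance $\ge\rho_*(t_1)$ from $0$, where $\omega_0$ is designed to differ from $\omega_0(0)$), while $\omega(\cdot,t_1)$ still takes a different limiting value at $0$; this is incompatible with $\omega(\cdot,t_1)\in C(\cl\dom)$, and it holds for every $t_1\in(0,\delta]$, i.e. the solution instantaneously loses continuity. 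For $\pi<\theta<2\pi$ with $\dom$ symmetric with respect to the corner, the same scheme applies with the conformal exponent $2\pi/\theta$ in place of $\pi/\theta$: the symmetry forces the $r^{\pi/\theta}$-mode of $\psi$ to vanish and selects the $r^{2\pi/\theta}$-mode, so that $u\sim r^{\,2\pi/\theta-1}$ near $0$ with $2\pi/\theta-1\in(0,1)$ --- bounded, keeping $0$ a genuine stagnation point --- and finite-time absorption on the inflow curve together with the same argument yields the conclusion.

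The main difficulties I expect are: (i) justifying the stream-function expansion \emph{uniformly in $t$} with $c$ continuous and $c(0)\neq0$, which relies on the Green-function and conformal-mapping analysis in corner domains; (ii) upgrading the leading-order asymptotics of $u$ near $0$ into a rigorous finite-time absorption statement, controlling the $o(\cdot)$ errors and the angular motion; and, the most delicate point, (iii) turning the degeneracy of $\Phi_{t_1}$ into a genuine discontinuity of the only a.e.\ defined transported vorticity --- which requires a careful design of $\omega_0$ and bookkeeping of the fluid near the corner --- and, in the reflex case, handling the very definition of the flow and of the solution (for which uniqueness is not available here), which is why the symmetry hypothesis is imposed there.
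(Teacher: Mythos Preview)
Your plan is essentially the same as the paper's: exploit that for $\theta>\pi/2$ the velocity near the corner scales like $r^{\beta-1}$ with $\beta-1<1$, so a boundary particle on the inflow edge is absorbed at the corner in finite time, and this forces a loss of continuity of the transported vorticity. The paper, however, carries this out more directly and sidesteps most of the difficulties you anticipate in (i)--(iii).

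Instead of a stream-function expansion with a time-dependent coefficient $c(t)$, the paper fixes the explicit datum $\omega_0(x)=|x|$, which is strictly positive on $\dom$. Positivity of $\omega$ alone gives, via a direct Green-function computation (\reflem{ulower}(b)), the uniform-in-time lower bound $u_1((x_1,0),t)\le -\refc{ulower2}\,x_1^{\beta-1}$ on the boundary edge near $0$; no asymptotic expansion and no control of $o(\cdot)$ remainders are needed, and your point (i) about a continuous nonvanishing $c(t)$ never arises. Your ``careful design'' in (iii) also evaporates with this choice: since $\omega_0(0)=0$ while $\omega_0(X)=|X|>0$, the trivial trajectory $\gamma_0\equiv 0$ (a genuine solution of \refeq{trajectory} because $u(0,t)=0$ by \reflem{uupper}) and the absorbed trajectory $\gamma_X$ both arrive at $0$ at time $T_X$ carrying distinct values, which is the discontinuity. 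For $\pi<\theta<2\pi$, rather than arguing that symmetry suppresses the $r^{\pi/\theta}$ mode, the paper takes the odd datum $\omega_0(x)=x_1$ and restricts to the half-domain $\wtil\dom=\{x\in\dom:x_1>0\}$, which has corner angle $\theta/2\in(\pi/2,\pi)$, reducing verbatim to the first case; this is equivalent to your second-mode observation but avoids any modal analysis. Your outline is sound, but at each step the paper replaces what you expect to be delicate analysis by an explicit choice of $\omega_0$ and a direct velocity estimate.
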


In the proof of our results, the estimates of the velocitiy fields near the corner 
 play important roles as in \cite{KS, Xu, IMY}. 
One of the new ingredients in our proof is to use of the conformal mapping
which have not used for the large time behavior of the vorticity.
This enables us to obtain the explicit representation of the Green function $G_{\dom}$ 
in the Biot-Savart law \label{eq:u} via the conformal mapping 
and to estimate the behavior of the velocity fields near the corner. 
Finally, we note that Theorems \ref{thm:acute} and 
\ref{thm:obtuse} hold for domains with more general corners or even finite number of corners;
see \refrem{corner}.

We use the following notation.
By the symbol $\const$ we denote an absolute positive constant
whose value is unimportant and may change from one occurrence to the next.
If necessary, we use $\const_0, \const_1, \dots$, to specify them.
We say that $f$ and $g$ are comparable and write $f\approx g$
if two positive quantities $f$ and $g$ satisfies $\const\inv \le f/g\le\const$
with some constant $\const\ge1$.
The constant $\const$ is referred to as the constant of comparison.
We have to pay attention for the dependency of
the constant of comparison.
For $x=(x_1,x_2)\in\R^2$ we let $\til x =(-x_1,x_2)$, 
$\ol{x} =(x_1,-x_2)$ and 
$\str{x} =x/|x|^2$. 
Let $m$ be the two-dimensional Lebesgue outer measure.

\section{Preliminaries}
Let $\Dom$ be a bounded simply connected open subset of $\R^2$. 
Identifying $\R^2$ with $\C$, 
the Riemann mapping theorem states that 
there exists a conformal mapping $f$ of the open unit disk $\D= B(0,1)$ 
onto $\Dom$. 
Moreover Carath{\'e}odory theorem asserts that
if $\Dom$ is Jordan domain, 
then $f$ has a continuous injective extension to $\ol{\D}$.
If $\Dom$ is  $C^{1,1}$-domain,
then the following Kellogg-Warschawski theorem holds.
See \cite[Theorem 3.6]{P} and 
\cite[Theorem I\hspace{-.1em}I.4.3 and Lemma I\hspace{-.1em}I.4.4]{GM}. 

\begin{thm}\label{KW}
Let $f$ be a conformal map 
from $\D$ onto a $C^{1,1}$-domain $\Dom$.
Then $f'$ has a continuous extension to $\cl \D$ and
\begin{equation}
\frac{f(\zeta)-f(z)}{\zeta-z}\to f'(z)\neq 0 
\qtext{for }\,\, \zeta\to z,\,\,\,\zeta,z\in\cl\D,
\end{equation}
\begin{equation}
|f'(z_1)-f'(z_2)|\le\const|z_1-z_2|\log|z_1-z_2|^{-1}
\qtext{for }z_1,z_2\in\cl\D,\quad|z_1-z_2|<1.
\end{equation}
\end{thm}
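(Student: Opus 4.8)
The plan is to analyse $\log f'$, which is a single-valued analytic function on $\D$ because $f$ is conformal, hence $f'\neq 0$ there, and to read off its boundary behaviour from the geometry of $\bd\Dom$. Since a $C^{1,1}$ domain is a Jordan domain, Carath\'eodory's theorem gives that $f$ extends to a homeomorphism of $\cl\D$ onto $\cl\Dom$. Let $\psi$ be the arc-length parametrisation of $\bd\Dom$ and $\beta(s)=\arg\psi'(s)$ its tangent angle; the hypothesis $\Dom\in C^{1,1}$ means precisely that $\beta$ is Lipschitz. Writing $e^{i\theta}\mapsto\psi(S(\theta))$ for the boundary correspondence induced by $f$, a formal differentiation of $f(e^{i\theta})=\psi(S(\theta))$ gives, at almost every boundary point,
\[
\arg f'(e^{i\theta})=\beta(S(\theta))-\theta-\tfrac{\pi}{2}=:\mu(\theta)\pmod{2\pi},
\]
and since $\beta$ and $S$ are continuous and increasing one may fix $\mu$ to be an honest continuous function on $\bd\D$.

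The next step is to represent $\log f'$ by a Schwarz integral. A $C^1$ Jordan domain is of Smirnov type, so $f'\in H^1(\D)$ and $\log f'$ lies in the Smirnov class $N^+$; such a function is determined, up to an additive real constant, by the boundary values of its imaginary part. Hence
\[
\log f'(z)=\log|f'(0)|+\frac{i}{2\pi}\int_0^{2\pi}\frac{e^{i\phi}+z}{e^{i\phi}-z}\,\mu(\phi)\,d\phi ,
\]
i.e.\ $\log f'$ equals, up to additive constants, the Poisson extension of the bounded function $\mu$ plus $i$ times its harmonic conjugate. In particular $\log f'$ is bounded on $\D$, so $|f'|=e^{\operatorname{Re}\log f'}$ is bounded above and below by positive constants.

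The quantitative part is a bootstrap on the modulus of continuity of $\mu$. Running the above under only a $C^1$/Dini hypothesis, and using that the harmonic conjugate of a Dini-continuous function is continuous, one gets that $f'$ extends continuously and without zeros to $\cl\D$; this forces $S$ to be continuously differentiable with $S'(\theta)=|f'(e^{i\theta})|$ bounded away from $0$, hence bi-Lipschitz. Feeding this back, the Lipschitz regularity of $\beta$ in the arc-length variable transfers to a Lipschitz bound for $\mu$ in $\theta$. Now one invokes the classical estimate that the harmonic conjugate sends a Lipschitz function on $\bd\D$ into the Zygmund class $\Lambda_*$, equivalently into the class with modulus of continuity $\const\, r\log r^{-1}$; applied to $\mu$ this shows that $\log f'$, and therefore $f'=\exp(\log f')$ since $\log f'$ is bounded, has exactly this modulus of continuity on $\cl\D$, which is the asserted log-Lipschitz bound. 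Finally, for $\zeta,z\in\cl\D$ one writes $\tfrac{f(\zeta)-f(z)}{\zeta-z}=\int_0^1 f'\bigl(z+t(\zeta-z)\bigr)\,dt$, the segment lying in $\cl\D$ by convexity, and the continuity and non-vanishing of $f'$ just established give the stated limit together with $f'(z)\neq 0$.

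I expect the main obstacle to be the circularity between the smoothness of $f'$ and that of the boundary correspondence $S$: transferring the Lipschitz modulus of the tangent angle $\beta$, a function of arc length, to $\mu=\arg f'$, a function of $\theta$, is impossible without first knowing that $S$ is bi-Lipschitz, and that rests in turn on the continuity and non-vanishing of $f'$ on $\cl\D$. Breaking this loop forces one to settle the qualitative $C^1$/Dini statement first and only afterwards extract the quantitative $C^{1,1}$ estimate; the logarithmic loss in the conclusion is unavoidable and reflects exactly the failure of the Hilbert transform to preserve the Lipschitz class.
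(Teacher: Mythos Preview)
The paper does not prove this theorem; it is quoted as a known result (the Kellogg--Warschawski theorem) with references to Pommerenke \cite[Theorem~3.6]{P} and Garnett--Marshall \cite[Theorem~II.4.3 and Lemma~II.4.4]{GM}. There is therefore no in-paper proof to compare against.

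That said, your sketch is essentially the classical argument found in those references: one analyses $\log f'$, identifies its boundary imaginary part with the tangent angle of $\partial D$ via $\arg f'(e^{i\theta})=\beta(S(\theta))-\theta-\tfrac{\pi}{2}$, represents $\log f'$ by a Schwarz integral (the Smirnov condition for $C^1$ Jordan domains ruling out a singular inner factor), and then transfers the Lipschitz regularity of $\beta$ through the conjugate-function operator, picking up exactly the logarithmic loss because the Hilbert transform maps $\mathrm{Lip}_1$ into the Zygmund class $\Lambda_*$ rather than back into $\mathrm{Lip}_1$. The bootstrap you flag---first running the argument under a Dini hypothesis to secure continuity and non-vanishing of $f'$ on $\overline{\D}$, hence bi-Lipschitzness of $S$, and only then feeding the full Lipschitz bound on $\beta$ back in---is precisely how Pommerenke resolves the circularity. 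The difference-quotient statement via $\int_0^1 f'(z+t(\zeta-z))\,dt$ and convexity of $\overline{\D}$ is also standard. Your outline is correct and matches the cited literature; the paper simply takes the result as input.
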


The following theorem states the smoothness of conformal map $f:\D\to\Dom$ 
in a neighborhood of $f^{-1}(\zeta)$ depends only 
on the smoothness of $\Bdy$ in a neighborhood of $\zeta\in\Bdy$.
See \cite[Theorem I\hspace{-.1em}I.4.1]{GM}.

\begin{thm}\label{thm:localsmooth}
Let $\Dom_1$ and $\Dom_2$ be Jordan domains such that
$\Dom_1\subset\Dom_2$ and 
let $\gamma\subset\Bdy_1\cap\Bdy_2$ be an open subarc.
Let $\vphi_j$ be a conformal map of $\D$ onto $\Dom_j$ $(j=1,2)$.
Then $\psi=\vphi_2^{-1}\circ\vphi_1$ has an analytic continuation 
across $\vphi_1^{-1}(\gamma)$, and $\psi'\neq 0$ on $\vphi_1^{-1}(\gamma)$.
\end{thm}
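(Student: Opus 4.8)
The plan is to realize $\psi=\vphi_2^{-1}\circ\vphi_1$ as a bounded injective holomorphic self-map of $\D$ and to apply the Schwarz reflection principle across the \emph{unit circle} --- the point being that one does not reflect across $\gamma$, which is not assumed smooth, but across the circular arc $\vphi_1^{-1}(\gamma)\subset\bd\D$, whose smoothness costs nothing. First I would observe that, since $\vphi_1(\D)=\Dom_1\subset\Dom_2=\vphi_2(\D)$, the composition $\psi$ is a well-defined injective holomorphic map of $\D$ into $\D$ with $\psi(\D)=\vphi_2^{-1}(\Dom_1)\subset\D$. As $\Dom_1$ and $\Dom_2$ are Jordan domains, Carath\'eodory's theorem supplies homeomorphic extensions $\vphi_1:\cl\D\to\cl{\Dom_1}$ and $\vphi_2:\cl\D\to\cl{\Dom_2}$; since $\cl{\Dom_1}\subset\cl{\Dom_2}$, the map $\psi$ extends continuously to all of $\cl\D$. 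Writing $I_j:=\vphi_j^{-1}(\gamma)$, which are open subarcs of $\bd\D$, one checks that $\psi$ carries $I_1$ homeomorphically onto $I_2\subset\bd\D$.

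Next I would invoke the reflection principle. Because $\psi$ is holomorphic in $\D$, continuous up to $I_1$, nonconstant, and satisfies $|\psi|\equiv1$ on $I_1$ (so $\psi$ is bounded away from $0$ near $I_1$), the formula
\[
\wtil\psi(z)=\frac{1}{\overline{\psi(1/\bar z)}}
\]
on the reflection of a one-sided neighborhood of $I_1$ defines a holomorphic continuation $\wtil\psi$ of $\psi$ to an open set $U\supset I_1=\vphi_1^{-1}(\gamma)$; this is the usual reflection across a circular arc (conjugate $\D$ to a half-plane and reflect across $\R$). Note that for $z\in U\sm\cl\D$ one has $1/\bar z\in\D$, hence $\psi(1/\bar z)\in\psi(\D)\subset\D$ and so $|\wtil\psi(z)|>1$; that is, $\wtil\psi$ sends $U\sm\cl\D$ into the exterior of $\cl\D$.

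To conclude that $\psi'\neq0$ on $I_1$, suppose $\wtil\psi'(z_0)=0$ for some $z_0\in I_1$. Since $\wtil\psi$ is nonconstant, there are neighborhoods $U_0\subset U$ of $z_0$ and $V$ of $w_0:=\psi(z_0)\in I_2$ such that $\wtil\psi:U_0\to V$ is exactly $k$-to-one (counted with multiplicity) for some $k\ge2$. Fix $w\in V\cap\D$. Then $w$ has no preimage on $I_1$ (because $\psi(I_1)\subset\bd\D$) and none in $U_0\sm\cl\D$ (by the previous paragraph), so all $k$ of its preimages lie in $U_0\cap\D$ --- contradicting the injectivity of $\psi=\wtil\psi|_\D$. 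Hence $k=1$ and $\psi'(z_0)\neq0$.

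I expect the delicate step to be the passage to the boundary in the first paragraph: one uses Carath\'eodory's theorem for \emph{both} domains and must check that near $\gamma$ the closure $\cl{\Dom_1}$ genuinely lies in $\cl{\Dom_2}$, so that $\vphi_2^{-1}$ may be composed continuously with $\vphi_1$. Once $\psi$ is known to be continuous on $\cl\D$ with $\psi(I_1)\subset\bd\D$, both the reflection and the final degree count are routine; the single idea worth highlighting is that the arc across which one reflects is a piece of $\bd\D$, so its smoothness, and with it the hypothesis of the reflection principle, is automatic.
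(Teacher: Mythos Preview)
Your argument via Schwarz reflection across the circular arc $\vphi_1^{-1}(\gamma)\subset\bd\D$ is correct and is precisely the standard proof; note, however, that the paper does not supply its own proof of this statement but simply quotes it as \cite[Theorem~II.4.1]{GM}, and the proof there proceeds along the same lines you outline. One minor remark: the step you flag as ``delicate'' is in fact immediate, since $\Dom_1\subset\Dom_2$ gives $\cl{\Dom_1}\subset\cl{\Dom_2}$ directly, so the Carath\'eodory extension $\vphi_2^{-1}:\cl{\Dom_2}\to\cl\D$ composes with $\vphi_1:\cl\D\to\cl{\Dom_1}$ without further justification.
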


Let $\U=\{z=x+iy\in\D:y>0\}$.
Using the above theorems, we show the following lemma.

\begin{lem}\label{lem:conformal}
Assume that $\dom$ satisfies \refeq{domain}. 
Let $\beta=\pi/\theta$.
Then there exists a conformal map $f:\dom \to \U$ with $f(0)=0$.
Let $g\equiv f^{-1}$.
Moreover there exist a constant $\dlabel{d:conformal}>0$ 
such that
\begin{enumerate}
\item\label{i:fcompara} 
$|f(z)|\approx|z|^{\beta}$
and $|f'(z)|\approx|z|^{\beta-1}$ 
for $z\in\ol{\dom}\cap B(0,\refd{conformal})$,
\item\label{i:gcompara} 
$|g(w)|\approx|w|^{\frac{1}{\beta}}$
and $|g'(w)|\approx|w|^{\frac{1}{\beta}-1}$
for $w\in \ol{\U} \cap B(0,\refd{conformal})$.
\end{enumerate}
Here $\refd{conformal}$ and 
the constant $\clabel{c:conformal}$
of comparison 
depend only on $\dom$.
\end{lem}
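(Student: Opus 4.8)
Here is the plan. The idea is to realize $f$ as a composition of three conformal maps, with all the quantitative content reduced to the elementary corner‑straightening map $z\mapsto z^{\beta}$.

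\medskip
\noindent\emph{Setup and reduction.} Recall from \refeq{domain} that near $0$ the domain $\dom$ coincides with the sector $S=\{z:0<\arg z<\theta\}$, so $\bdy\cap B(0,r_0)=\bd S\cap B(0,r_0)$; note also that $0$ is a \emph{flat} boundary point of $\U$ and of the upper half‑plane $\mathbb H=\{w:\operatorname{Im}w>0\}$, i.e.\ both boundaries coincide with a segment of $\R$ near $0$. On $S$ we have the single‑valued branch $p(z)=z^{\beta}=|z|^{\beta}e^{i\beta\arg z}$ with $\arg z\in(0,\theta)$; since $\beta\theta=\pi$, $p$ is a conformal bijection of $S$ onto $\mathbb H$ that carries $\bd S$ near $0$ into $\R$, with $|p(z)|=|z|^{\beta}$ and $|p'(z)|=\beta|z|^{\beta-1}$. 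It therefore suffices to produce a conformal $f:\dom\to\U$ with $f(0)=0$ and $|f(z)|\approx|z|^{\beta}$, $|f'(z)|\approx|z|^{\beta-1}$ for $z\in\cl\dom\cap B(0,\delta)$ with $\delta>0$ depending only on $\dom$: then \refi{gcompara} for $g=f\inv$ follows by inversion, since $w=f(z)$ gives $|w|\approx|z|^{\beta}$, hence $|g(w)|=|z|\approx|w|^{1/\beta}$ and $|g'(w)|=1/|f'(g(w))|\approx|g(w)|^{1-\beta}\approx|w|^{1/\beta-1}$, and we set $\refd{conformal}=\delta$, $\refc{conformal}$ the resulting constant of comparison.

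\medskip
\noindent\emph{Step 1: a base map to the half‑plane.} Since $\dom$ is bounded and simply connected, the Riemann mapping theorem gives a conformal $\vphi:\dom\to\D$; because $\bdy$ is locally a Jordan arc at $0$ (two segments meeting at $0$), Carath\'eodory's theorem makes $\vphi$ a homeomorphism of a neighborhood of $0$ in $\cl\dom$ onto a neighborhood of $\vphi(0)\in\bd\D$. Composing $\vphi$ with a Möbius transformation sending $\bd\D$ to $\R$ and $\vphi(0)$ to $0$, we get a conformal $F:\dom\to\mathbb H$ with $F(0)=0$, which is still a homeomorphism of $\cl\dom$ onto $\cl{\mathbb H}$ near $0$ and carries $\bdy$ into $\R$ there. (Alternatively one could first map $\dom$ onto a bounded $C^{1,1}$ domain that is flat near $0$ and invoke Theorem~\ref{KW}; the reflection argument below is more economical.)

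\medskip
\noindent\emph{Step 2: the corner estimate.} Put $\Phi:=F\circ p\inv$. By Step~1 this is conformal from a neighborhood of $0$ in $\mathbb H$ into $\mathbb H$, fixes $0$, is a homeomorphism up to the boundary near $0$, and maps the real boundary segment into $\R$; hence the Schwarz reflection principle extends $\Phi$ holomorphically to a full neighborhood of $0$. The crucial point is $\Phi'(0)\neq0$: the inverse map $\Phi\inv=p\circ F\inv$ has exactly the same structure (conformal from a subdomain of $\mathbb H$ into $\mathbb H$, fixing $0$, with real boundary values on a real segment that contains $0$ in its interior, since $\Phi$ is a boundary homeomorphism), so it too extends holomorphically across $\R$; the two extensions agree with the identity on $\mathbb H$ near $0$, hence agree with the identity on a neighborhood of $0$ by analytic continuation, so $\Phi$ is biholomorphic at $0$ and $\Phi'(0)\neq0$. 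Consequently $\Phi(\eta)=\Phi'(0)\,\eta\,(1+O(\eta))$ near $0$; substituting $\eta=p(z)$ gives $F(z)=\Phi(p(z))\approx z^{\beta}$, so $|F(z)|\approx|z|^{\beta}$, and differentiating $F=\Phi\circ p$ gives $|F'(z)|=|\Phi'(p(z))|\,|p'(z)|\approx\beta|z|^{\beta-1}$, for $z\in\cl\dom\cap B(0,\delta)$ with $\delta>0$ depending only on $\dom$.

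\medskip
\noindent\emph{Step 3: conclusion.} Compose with a conformal map $N:\mathbb H\to\U$ with $N(0)=0$; since $0$ is a flat boundary point of both, the same reflection‑and‑inverse argument applied to $N$ gives that $N$ extends holomorphically near $0$ with $N'(0)\neq0$, hence $|N(\eta)|\approx|\eta|$ there. Then $f:=N\circ F:\dom\to\U$ is a conformal bijection with $f(0)=0$, and near $0$ one has $|f(z)|\approx|F(z)|\approx|z|^{\beta}$ and $|f'(z)|\approx|F'(z)|\approx|z|^{\beta-1}$; extending to the boundary by continuity gives \refi{fcompara}, and \refi{gcompara} follows as noted. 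All the choices (the Riemann map and its normalization, the Möbius and Cayley factors, $N$, the radius $\delta$, the numbers $\Phi'(0)$ and $N'(0)$) are determined by $\dom$ alone, so $\refd{conformal}$ and the constant of comparison $\refc{conformal}$ depend only on $\dom$. The main obstacle is precisely the nondegeneracy $\Phi'(0)\neq0$ — that straightening the corner by $z^{\beta}$ is bi‑Lipschitz at the corner — which is where the reflection‑plus‑inverse trick is needed; everything else is routine tracking of comparabilities.
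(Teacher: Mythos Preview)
Your proof is correct and shares the paper's basic architecture---factor the Riemann map through the corner-straightening power $p(z)=z^{\beta}$ and show that the residual factor is bi-Lipschitz at $0$---but the mechanism you use for that last step is genuinely different. The paper first applies $\vphi(z)=z^{\beta}$ to all of $\dom$, producing a domain whose boundary is a straight segment near $0$, then takes a Riemann map $f_1$ onto $\U$ and invokes the Kellogg--Warschawski theorem together with the localization principle (\refthm{localsmooth}) to conclude $|f_1'|\approx 1$ near $0$. You instead map $\dom$ to $\mathbb H$ first, write the result as $\Phi\circ p$, and obtain $\Phi'(0)\neq 0$ by Schwarz reflection applied simultaneously to $\Phi$ and $\Phi^{-1}$. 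Your route is more elementary---it uses nothing beyond Carath\'eodory and the reflection principle, bypassing the $C^{1,1}$ boundary-regularity machinery---precisely because under assumption \refeq{domain} the straightened boundary is an honest line segment. The paper's route, on the other hand, is what one needs for the generalization in \refrem{corner}, where the straightened boundary is merely $C^{1,1}$ and reflection is unavailable. One small cosmetic point: in your reduction you set $\refd{conformal}=\delta$, but to get \refi{gcompara} on $\ol{\U}\cap B(0,\refd{conformal})$ you should shrink $\refd{conformal}$ further so that $g$ carries that half-disk into the region where \refi{fcompara} already holds; this is automatic from the continuity of $g$ at $0$.
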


\begin{proof}
Let $\vphi(z)=z^{\beta}$.
Observe that $\vphi(0)=0$ and 
$\partial\vphi(\dom)$ is locally a straight line near 0.
By the Riemann mapping theorem, 
there exists a conformal map $f_1:\vphi(\dom)\to\U$ with $f_1(0)=0$.
Let $g_1=f_1^{-1}$.
The Kellogg-Warschawski theorem and 
\refthm{localsmooth} imply that
there is a constant $\delta>0$ such that
\[
|f'_1(z)|\approx 1,\quad |g'_1(w)|\approx 1,\quad
|g_1(w)|\approx |w|
\]
for 
$z\in \ol{\vphi(\dom)}\cap B(0,\delta)$ and
$w\in \ol{\U}\cap B(0,\delta)$.
Let $f=f_1\circ \vphi$ and 
let $\refd{conformal}<\delta$ be a sufficiently small constant.
Then we see that $f(0)=0$.
Since $f'(z)=f'_1(\vphi(z))\vphi'(z)$ and
$g'(w)=\vphi^{-1}(g_1(w))g'_1(w)$, 
we have
\begin{equation}\label{eq:f'compara}
|f'(z)|\approx |\vphi'(z)|\approx |z|^{\beta-1}
\end{equation}
and
\begin{equation}\label{eq:g'compara}
|g'(w)|\approx |(\vphi^{-1})(g_1(w))|
\approx |g_1(w)|^{\frac{1}{\beta}-1}
\approx |w|^{\frac{1}{\beta}-1}
\end{equation}
for 
$z\in \ol{\vphi(\dom)}\cap B(0,\refd{conformal})$ and
$w\in \ol{\U}\cap B(0,\refd{conformal})$.
It follows from the above estimates and the mean-value property 
with $f(0)=0$ and $g(0)=0$ that
\[
|f(z)| \approx |z|^{\beta},\quad 
|g(w)| \approx |w|^{\frac{1}{\beta}}
\]
for 
$z\in \ol{\vphi(\dom)}\cap B(0,\refd{conformal})$ and
$w\in \ol{\U}\cap B(0,\refd{conformal})$.
Thus the properties \refi{fcompara},\refi{gcompara} hold.
\end{proof}

\begin{rem}\label{rem:corner}
Alternatively, we claim that Theorems \ref{thm:acute} and 
\ref{thm:obtuse} hold for domains with 
a more general corners.
Let $\gamma(s)$ be a parametrization of $\bdy$ with $\gamma(0)=0$.
We consider a domain such that
$\gamma$ is $C^{1,1}$-Jordan curve except at $0\in\bdy$ and
$\lim_{s \searrow 0}\arg{\gamma(s)-\gamma(-s)}=\theta$.
In the proof of \reflem{conformal}, 
we would also need a condition
\begin{equation}\label{eq:gcorner}
\text{$(\gamma(s))^{\beta}$ is $C^{1,1}$ close to $0\in\bdy$},
\end{equation}
in order to use the Kellogg-Warschawski theorem. 
Then \reflem{conformal} holds for domains with a general corner.
For simplicity, we assume that $\dom$ satisfies \refeq{domain}. 
\end{rem}

\section{The key lemmas}
To prove Theorems \ref{thm:acute} and \ref{thm:obtuse}, 
we need a technical lemma for the expansion of velocity field.
Assume that $\dom$ satisfies \refeq{domain}.
Since the Green function for the unit upper half-disk 
$\U$ is given explicitly by 
\[
G_{\U}(x,y)=\frac{1}{2\pi}(\log|x-y|-\log|x-\str{y}|
                             -\log|\ol{x}-y|+\log|\ol{x}-\str{y}|),
\]
the Green function for $\dom$ is given explicitly by
\[
\begin{split}
G_{\dom}(x,y)&=G_{\U}(f(x),f(y)) \\
&=\frac{1}{2\pi}\big(\log|f(x)-f(y)|-\log|f(x)-\str{f(y)}|
                 -\log|\ol{f(x)}-f(y)|+\log|\ol{f(x)}-\str{f(y)}|\big),
\end{split}
\]
where $f$ is the conformal map of $\dom$ onto $\U$ 
in \reflem{conformal}.
Let 
\[
\begin{split}
G(x,y)=\log
\frac{\big|f(x)-f(y)\big|}
{\big|\ol{f(x)}-f(y)\big|},\qquad
G^*(x,y)=\log
\frac{\big|\ol{f(x)}-\str{f(y)}\big|}
{\big|f(x)-\str{f(y)}\big|}
\end{split}
\]
for $x,y\in\dom$.
Firstly we get an upper bound of $u$ near the corner.

\begin{lem}\label{lem:uupper}
Let $0<\theta<\pi$ and $\beta=\pi/\theta$.
Assume that $\dom$ satisfies \refeq{domain}.
There exists a constant $\const>0$ depending only on 
$\dom$ such that 
\begin{equation}\label{eq:uupper}
|u(x,t)|
\le \const\normi{\omega_0}
\begin{cases}
|x| 
&\qtext{if $\beta>2$}, \\
|x|\log|x|^{-1}
&\qtext{if $\beta=2$}, \\
|x|^{\beta-1}
&\qtext{if $1<\beta<2$},
\end{cases}
\end{equation}
for $x\in\dom$ and $t>0$.
In particular, we see that $u(0,t)=0$ for any $t>0$.
\end{lem}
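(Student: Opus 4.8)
The plan is to bound $|u(x,t)|$ through the Biot--Savart law in differentiated form, $u(x,t)=\int_\dom\grad_x^{\bot}G_\dom(x,y)\,\omega(y,t)\,dy$ (legitimate because $\grad_x G_\dom$ is locally integrable), so that $|u(x,t)|\le\normi{\omega_0}\int_\dom|\grad_x G_\dom(x,y)|\,dy$ using $\normi{\omega(t)}=\normi{\omega_0}$. The key tool is the conformal representation $G_\dom(x,y)=G_\U(f(x),f(y))$ with $f$, $g=f\inv$ as in \reflem{conformal}. Writing $w=f(x)$, $\eta=f(y)$ and differentiating $2\pi G_\dom(x,y)=\mathrm{Re}\,H(f(x))$, where $H$ is the local holomorphic primitive built from the four logarithmic terms of $G_\U$, one obtains after simplification (the numerator of $H'$ collapses to $(1-|\eta|^2)(1-w^2)$) the closed form
\[
|\grad_x G_\dom(x,y)|=\frac{|f'(x)|}{2\pi}\cdot\frac{2\,(\mathrm{Im}\,\eta)(1-|\eta|^2)\,|1-w^2|}{|w-\eta|\,|w-\ol\eta|\,|1-w\eta|\,|1-w\ol\eta|}.
\]
After the change of variables $y=g(\eta)$, $dm(y)=|g'(\eta)|^2\,dm(\eta)$, the task reduces to estimating $|f'(x)|\,I(f(x))$, where $I(w)$ is the integral over $\U$ of the displayed kernel against the conformal weight $|g'(\eta)|^2$.

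I would then split at $|x|=\delta_*$, with $\delta_*\in(0,\refd{conformal}]$ a constant depending only on $\dom$ so small that $|f(x)|<\refd{conformal}/2$ whenever $|x|<\delta_*$ (possible by \reflem{conformal}). For $|x|\ge\delta_*$ it suffices to use the crude bound $|u(x,t)|\le\const\normi{\omega_0}$ with $\const$ depending only on $\dom$: since $|x|$ then lies in the compact set $[\delta_*,\diam\dom]\subset(0,1)$, each of $|x|$, $|x|\log|x|\inv$, $|x|^{\beta-1}$ is bounded below there by a positive constant, so the crude bound is absorbed into the claimed right-hand side. For $|x|<\delta_*$ one has $|w|\approx|x|^{\beta}$ and $|f'(x)|\approx|x|^{\beta-1}$ by \reflem{conformal}, and I would split the $\eta$-integral at $|\eta|=\refd{conformal}$. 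On $|\eta|\ge\refd{conformal}$ every denominator factor is bounded below (using $|w|<\refd{conformal}/2$) and the numerator factors are bounded above, so this part is $\le\const\int_\U|g'(\eta)|^2\,dm(\eta)=\const\,m(\dom)$; multiplied by the prefactor this contributes $\le\const\normi{\omega_0}|x|^{\beta-1}$, which is dominated by the asserted bound in all three cases.

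The region $|\eta|<\refd{conformal}$ is the heart of the matter. There $|g'(\eta)|^2\approx|\eta|^{2/\beta-2}$ by \reflem{conformal}, while $|1-w\eta|$, $|1-w\ol\eta|$, $|1-w^2|$, $1-|\eta|^2$ are all $\approx1$, so one must estimate
\[
J(w)=\int_{\U\cap B(0,\refd{conformal})}\frac{\mathrm{Im}\,\eta}{|w-\eta|\,|w-\ol\eta|}\,|\eta|^{2/\beta-2}\,dm(\eta)\qquad\text{as }|w|\to0.
\]
I would cut this into $|\eta|\le|w|/2$, $|w|/2<|\eta|\le2|w|$, and $2|w|<|\eta|<\refd{conformal}$. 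On the first region $|w-\eta|,|w-\ol\eta|\approx|w|$ and $\mathrm{Im}\,\eta\le|\eta|$, giving $\le\const|w|^{-2}\int_0^{|w|}\rho^{2/\beta}\,d\rho\approx|w|^{2/\beta-1}$; on the third $|w-\eta|,|w-\ol\eta|\approx|\eta|$, giving $\le\const\int_{|w|}^{\refd{conformal}}\rho^{2/\beta-2}\,d\rho$. On the diagonal region $|\eta|\approx|w|$, where $|w-\eta|$ may degenerate, the crucial observation is the half-plane inequality $\mathrm{Im}\,\eta\le|w-\ol\eta|$ for $w,\eta$ in the upper half-plane (indeed $|w-\ol\eta|\ge|\mathrm{Im}(w-\ol\eta)|=\mathrm{Im}\,w+\mathrm{Im}\,\eta\ge\mathrm{Im}\,\eta$), which removes one singular factor and leaves $\le\const|w|^{2/\beta-2}\int_{|\eta-w|\le3|w|}|w-\eta|\inv\,dm(\eta)\approx|w|^{2/\beta-1}$. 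The three cases then emerge solely from $\int_{|w|}^{\refd{conformal}}\rho^{2/\beta-2}\,d\rho$, which is $\le\const$ for $\beta<2$, $\approx\log|w|\inv$ for $\beta=2$, and $\approx|w|^{2/\beta-1}$ for $\beta>2$; hence $J(w)\le\const$, $\const\log|w|\inv$, or $\const|w|^{2/\beta-1}$ respectively. Multiplying by $|f'(x)|\approx|x|^{\beta-1}$ and using $|w|\approx|x|^{\beta}$ yields $|x|^{\beta-1}$, $|x|\log|x|\inv$, and $|x|$, as claimed; and $u(0,t)=0$ follows since the right-hand side tends to $0$ as $x\to0$ while $u(\cdot,t)$ extends continuously to $0$.

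The step I expect to be the main obstacle is precisely this near-corner analysis: the conformal weight $|\eta|^{2/\beta-2}$ is genuinely singular at the corner (because $\beta>1$) and has to be balanced against the near-diagonal singularity of the Biot--Savart kernel, and it is in the outcome of this competition that the threshold $\beta=2$ (i.e.\ $\theta=\pi/2$) appears. The two facts that make it go through are the closed-form expression for $|\grad_x G_\dom|$ above and the inequality $\mathrm{Im}\,\eta\le|w-\ol\eta|$, without which the diagonal region would not be directly integrable.
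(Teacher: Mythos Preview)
Your proposal is correct and follows the same overall architecture as the paper: pull back through the conformal map $f$, change variables $\eta=f(y)$ to pick up the weight $|g'(\eta)|^2\approx|\eta|^{2/\beta-2}$, and split the $\eta$--integral into a ball around $0$, a ball around $w=f(x)$, and the remaining annulus, so that the threshold $\beta=2$ emerges from $\int_{|w|}^{\const}\rho^{2/\beta-2}\,d\rho$.

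The one genuine difference is in how you bound the kernel. The paper never combines the four logarithms: it keeps $G$ and $G^{*}$ separate, bounds the $G^{*}$--piece and the far piece of $G$ by $\const|f'(x)|/|f(x)-f(y)^{*}|$ and $\const|f'(x)|/|f(x)-f(y)|$ respectively (both giving $\const|x|^{\beta-1}$), and on the near piece uses the crude inequality $|\partial_{x_2}G(x,y)|\le\const|f'(x)|/|f(x)-f(y)|$, which leads directly to $\int_{\U\cap B(0,\ve')}|z-w|^{-1}|w|^{2/\beta-2}\,dw$. You instead collapse $H'(w)$ to the closed form with numerator $2i\,\mathrm{Im}\,\eta\,(1-w^2)(1-|\eta|^2)$ and exploit the half--plane inequality $\mathrm{Im}\,\eta\le|w-\ol\eta|$ on the diagonal annulus. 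Both routes land on the same scalar integral; your version is a little sharper (it retains the boundary cancellation built into the Green function), while the paper's cruder bound avoids the algebra of combining the four terms and does not need the half--plane inequality at all. Either way the decisive computation is the three--region split, which you and the paper carry out identically.
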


\begin{proof}
Let $f,g,\refc{conformal}$ and $\refd{conformal}$ be as 
in \reflem{conformal}.
Let $\delta$ be a small positive constant to be determined later and 
let $x\in\dom$.
It is sufficient to show that \refeq{uupper} for $|x|<\delta$.
By \reflem{conformal}\refi{fcompara}, 
we observe that
there exist $0<\ve<\refd{conformal}$ such that
if $y\in\dom_+$ and $|y|\ge\ve$,
then $|f(y)|\ge\Big(\frac{\ve}{\refc{conformal}}\Big)^{\beta}$.
Let us to be $\delta<\frac{\ve}{(2\refc{conformal}^2)^{\frac{1}{\beta}}}$.
Then we have
\begin{equation}\label{eq:u1sep}
{\footnotesize
\begin{split}
u_1(x,t)
&=
\frac{1}{2\pi}\int_{\dom\cap B(0,\ve)}
\pder{G}{x_2}(x,y)\omega(y,t)dy 
+\frac{1}{2\pi}\int_{\dom\sm B(0,\ve)}
\pder{G}{x_2}(x,y)\omega(y,t)dy 
+\frac{1}{2\pi}\int_{\dom}
\pder{G^{*}}{x_2}(x,y)\omega(y,t)dy.
\end{split}
}
\end{equation}
for $|x|<\delta$.

Firstly we estimate 
the last term of the right hand side of \refeq{u1sep}.
Assume that $\delta<\frac{1}{(2\refc{conformal})^{\beta}}$.
Since $|\str{f(y)}|\ge 1$,
we have $|f(x)|\le\refc{conformal}|x|^{\beta}\le\frac{1}{2}
\le\frac{1}{2}|\str{f(y)}|$, 
so that
\begin{equation}\label{eq:*estimate}
{\footnotesize
\big|\ol{f(x)}-\str{f(y)}\big|
\ge\big|f(x)-\str{f(y)}\big| 
\ge \frac{1}{2}|\str{f(y)}|
\ge \frac{1}{2}.
}
\end{equation}
We have
\[
{\footnotesize
\begin{split}
\pder{G^{*}}{x_2}(x,y)
&=
-\frac{(f_1(x)-\str{f_1}(y))\pder{f_1}{x_2}(x)
       +(f_2(x)-\str{f_2}(y))\pder{f_2}{x_2}(x)}
      {\big|f(x)-\str{f(y)}\big|^2}
+\frac{(f_1(x)-\str{f_1}(y))\pder{f_1}{x_2}(x)
       +(f_2(x)+\str{f_2}(y))\pder{f_2}{x_2}(x)}
      {\big|\ol{f(x)}-\str{f(y)}\big|^2}, \\
\end{split}
}
\]
where $f(x)=(f_1(x),f_2(x))$ and $\str{f(y)}=(\str{f_1}(y),\str{f_2}(y))$.
Thus for $y\in\dom_+$, we have
\[
\Bigg|\pder{G^{*}}{x_2}(x,y)\Bigg|
\le\const\frac{|f'(x)|}{\big|f(x)-\str{f(y)}\big|}
\le\const|x|^{\beta-1},
\]
by \refeq{*estimate} and \reflem{conformal}\refi{fcompara}.
Therefore we have
\begin{equation}\label{eq:G*}
\Bigg|\int_{\dom}
\pder{G^{*}}{x_2}(x,y)\omega(y,t)dy\Bigg|
\le\const
|x|^{\beta-1}\normi{\omega_0}.
\end{equation}

Next we estimate 
the second term of the right hand side of \refeq{u1sep}.
Let $y\in\dom\sm B(0,\ve)$.
Assume that $\delta<\frac{\ve}{(2\refc{conformal})^{\frac{1}{\beta}}
\refc{conformal}}$.
Then $|f(x)|\le\refc{conformal}|x|^{\beta}\le\frac{1}{2}\Big(\frac{\ve}{\refc{conformal}}\Big)^{\beta}
\le\frac{1}{2}|f(y)|$, 
so that 
\[
\big|\ol{f(x)}-f(y)\big|
\ge \big|f(x)-f(y)\big|
\ge \frac{1}{2}|f(y)|
\ge \frac{1}{4}\Big(\frac{\ve}{\refc{conformal}}\Big)^{\beta}. 
\] 
Since
\[
{\footnotesize
\begin{split}
\pder{G}{x_2}(x,y)&=
+\frac{\big(f_1(x)-f_1(y)\big)\pder{f_1}{x_2}(x)
       +\big(f_2(x)-f_2(y)\big)\pder{f_2}{x_2}(x)}
     {\big|f(x)-f(y)\big|^{2}}
-\frac{\big(f_1(x)-f_1(y)\big)\pder{f_1}{x_2}(x)
       +\big(f_2(x)+f_2(y)\big)\pder{f_2}{x_2}(x)}
      {\big|\ol{f(x)}-f(y)\big|^{2}}, \\
\end{split}
}
\]
we have
\[
\Bigg|\pder{G}{x_2}(x,y)\Bigg|
\le\const\frac{|f'(x)|}{\big|f(x)-f(y)\big|}
\le\const|x|^{\beta-1},
\]
by \reflem{conformal}\refi{fcompara}.
Therefore we have
\begin{equation}\label{eq:Gout}
\Bigg|\int_{\dom\sm B(0,\ve)}
\pder{G}{x_2}(x,y)\omega(y,t)dy\Bigg|
\le\const
|x|^{\beta-1}\normi{\omega_0}.
\end{equation}

Finally we consider 
the first term of the right hand side of \refeq{u1sep}.
In this case, the singularity at $x=y$ appears.
So we need to calculate more carefully.
We have
\[
\Bigg|\int_{\dom\cap B(0,\ve)}
\pder{G}{x_2}(x,y)\omega(y,t)dy\Bigg|
\le\const
|x|^{\beta-1}\normi{\omega_0}
\int_{\dom\cap B(0,\ve)}
\frac{dy}{\big|f(x)-f(y)\big|}
\]
by \reflem{conformal}\refi{fcompara}.
Let $z=f(x)$ and $\ve'=\refc{conformal}\ve^{\beta}$.
The substitution $w=f(y)$ yields
\[
{\footnotesize
\begin{split}
\int_{\dom\cap B(0,\ve)}
\frac{dy}{\big|f(x)-f(y)\big|}
&\le
\int_{\U\cap B(0,\ve')}
\frac{dw}{\big|z-w\big|\big| f'(y) \big|^{2}} \\
&\le\const
\int_{B(0,\ve')}
\frac{dw}{\big|z-w\big|\big| w \big|^{2-2/\beta}} \\
&\le\const
\Bigg(\int_{B(0,\frac{1}{2}|f(x)|)}
\frac{dw}{\big|z-w\big|\big| w \big|^{2-2/\beta}}
+
\int_{B(z,\frac{1}{2}|f(x)|))}
\frac{dw}{\big|z-w\big|\big| w \big|^{2-2/\beta}} \\
&\qquad\qquad\qquad\qquad\qquad
+\int_{B(0,\ve')\sm
[B(0,\frac{1}{2}|f(x)|)
\cup B(z,\frac{1}{2}|f(x)|)]}
\frac{dw}{\big|z-w\big|\big| w \big|^{2-2/\beta}}
\Bigg) \\
&\le\const
\Bigg(
|x|^{-\beta}
\int_{0}^{\frac{1}{2}|f(x)|}
r^{\frac{2}{\beta}-1}dr
+
|x|^{2-2\beta}
\int_{0}^{\frac{1}{2}|f(x)|}
dr 
+\int_{\frac{1}{2}|f(x)|}^{\ve'}r^{\frac{2}{\beta}-2} dr
\Bigg) \\
&\le
\begin{cases}
\const|x|^{2-\beta} 
&\qtext{if $\beta>2$}, \\
\const\log|x|^{-1}
&\qtext{if $\beta=2$}, \\
\const
&\qtext{if $1<\beta<2$},
\end{cases}
\end{split}
}
\]
by \reflem{conformal}\refi{fcompara}.
Therefore we have
\begin{equation}\label{eq:Gin}
\Bigg|\int_{\dom\cap B(0,\ve)}
\pder{G}{x_2}(x,y)\omega(y,t)dy\Bigg|
\le \const\normi{\omega_0}
\begin{cases}
|x| 
&\qtext{if $\beta>2$}, \\
|x|\log|x|^{-1}
&\qtext{if $\beta=2$}, \\
|x|^{\beta-1}
&\qtext{if $1<\beta<2$}.
\end{cases}
\end{equation}

Combining \refeq{G*},\refeq{Gout} and \refeq{Gin}, 
we have 
\[
|u_1(x,t)|
\le \const\normi{\omega_0}
\begin{cases}
|x| 
&\qtext{if $\beta>2$}, \\
|x|\log|x|^{-1}
&\qtext{if $\beta=2$}, \\
|x|^{\beta-1}
&\qtext{if $1<\beta<2$}.
\end{cases}
\]
In a similar way to the proof of this estimate, 
we obtain that 
\[
|u_2(x,t)|
\le \const\normi{\omega_0}
\begin{cases}
|x| 
&\qtext{if $\beta>2$}, \\
|x|\log|x|^{-1}
&\qtext{if $\beta=2$}, \\
|x|^{\beta-1}
&\qtext{if $1<\beta<2$},
\end{cases}
\]
so that \refeq{uupper} holds.
\end{proof}

If $\theta\le\frac{\pi}{2}$ and $\bdy$ is $C^{1,1}$ except at $0\in\bdy$, 
then the velocity $u$ is log-Lipchitz continuous on $\cl\dom$.
In a way similar to the proof of \cite[Proposition 3.4]{LMW} 
we obtain the following lemma.

\begin{lem}\label{lem:logLip}
Let $0<\theta\le\frac{\pi}{2}$.
Assume that $\dom$ satisfies \refeq{domain} 
and $\dom$ is $C^{1,1}$ except at $0\in\bdy$.
There exists a constant $\const>0$ depending only on 
$\dom$ such that 
\begin{equation}\label{eq:logLip}
|u(x,t)-u(y,t)|
\le \const\normi{\omega_0}
|x-y|\log|x-y|^{-1}
\end{equation}
for $x,y\in\dom$ and $t>0$.
\end{lem}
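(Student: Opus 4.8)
The plan is to reduce the claim to the log-Lipschitz estimate for the velocity on smooth domains (as in \cite[Proposition 3.4]{LMW} or the classical Yudovich theory), handling the corner at $0$ separately via \reflem{uupper}. First I would recall the Biot--Savart representation \refeq{u}: writing $u=\grad^\bot\int_\dom G_\dom(x,y)\omega(y,t)\,dy$ with $G_\dom(x,y)=G_\U(f(x),f(y))$, we must bound $|\grad_x G_\dom(x,y)-\grad_x G_\dom(x',y)|$ integrated against $\omega$. Split the integral in $y$ and also treat two regimes for the pair $(x,x')$: the "near-corner" regime where $|x|,|x'|$ are comparable to $|x-x'|$ and small, and the "interior" regime where $x,x'$ stay a fixed distance from $0$ (relative to their separation).

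In the interior regime, $f$ is a $C^{1,1}$-diffeomorphism on a neighborhood bounded away from $0$ by \refthm{KW} and \refthm{localsmooth}; pulling back to $\U$, the kernel $\grad_x G_\U(f(x),f(y))$ has the same singularity structure as the flat half-plane Green function, whose gradient is log-Lipschitz in $x$ uniformly, and composition with the Lipschitz map $f$ together with its Lipschitz derivative preserves the log-Lipschitz modulus. This is exactly the computation in \cite{LMW}; the contribution of the reflected/image terms $\partial G^*/\partial x_i$ is even better-behaved since those kernels are smooth for $x,y\in\dom$ (denominators bounded below, as already exploited in the proof of \reflem{uupper} via \refeq{*estimate}). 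So one gets $|u(x,t)-u(x',t)|\le \const\normi{\omega_0}|x-x'|\log|x-x'|^{-1}$ in this regime.

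In the near-corner regime, when $|x-x'|\ge \tfrac12\max(|x|,|x'|)$ — equivalently $x$ and $x'$ are both within $O(|x-x'|)$ of the corner — I would not try to compare $u(x)$ and $u(x')$ directly, but instead use the \emph{pointwise} bound of \reflem{uupper}. Since $\theta\le\pi/2$ gives $\beta=\pi/\theta\ge 2$, that lemma yields $|u(x,t)|\le\const\normi{\omega_0}\,|x|\log|x|^{-1}$ (the $\beta=2$ case being the worst), and the function $r\mapsto r\log r^{-1}$ is itself log-Lipschitz near $0$; hence
\[
|u(x,t)-u(x',t)|\le |u(x,t)|+|u(x',t)|\le \const\normi{\omega_0}\bigl(|x|\log|x|^{-1}+|x'|\log|x'|^{-1}\bigr)\le \const\normi{\omega_0}\,|x-x'|\log|x-x'|^{-1},
\]
using $|x|,|x'|\le \const|x-x'|$ and monotonicity of $r\log r^{-1}$ for small $r$. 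For the remaining possibility that one of the points, say $x'$, lies in the interior away from $0$ while $x$ is near the corner but $|x-x'|$ is comparable to $|x'|$, one triangulates through an intermediate point at distance $\sim|x-x'|$ from $0$ and combines the two regimes.

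The main obstacle is the near-corner interior estimate done carefully: one must verify that when $x,x'$ are both small but $|x-x'|\ll|x|$, the pulled-back kernel estimate from \cite{LMW} still applies with constants independent of how close to the corner one sits. The point is that $f$ restricted to $\dom\cap B(0,\refd{conformal})$ is comparable to $z\mapsto z^\beta$ by \reflem{conformal}, so on a dyadic annulus $|x|\approx 2^{-k}$ the map $f$ is, after rescaling, uniformly bi-Lipschitz with uniformly log-Lipschitz derivative; the half-disk Green-kernel estimate is scale-invariant up to the $\log$, and summing the resulting geometric-type series in $k$ costs only the extra logarithmic factor already present in \refeq{logLip}. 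I would organize this by the standard splitting of $\dom$ into $\{|y-x|\le 2|x-x'|\}$, $\{|y-x'|\le 2|x-x'|\}$ and the far region, estimating the first two by the size bound on $\grad G_\dom$ times the area (which gives $|x-x'|\log|x-x'|^{-1}$ after integrating $|f(x)-f(y)|^{-1}$ exactly as in the last display of the proof of \reflem{uupper}) and the far region by the mean-value inequality applied to $\grad_x G_\dom$.
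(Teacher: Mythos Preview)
The paper does not actually supply a proof for this lemma: it simply states that the result is obtained ``in a way similar to the proof of \cite[Proposition 3.4]{LMW}'' and moves on. Your proposal is precisely an attempt to unpack that citation---reducing to the standard log-Lipschitz kernel estimate away from the corner (via \refthm{KW} and \refthm{localsmooth}, as in \cite{LMW}) and invoking the pointwise bound of \reflem{uupper} in the regime where both points are within $O(|x-x'|)$ of the corner---so your approach is the same as the paper's, only spelled out in more detail than the authors chose to give.

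One comment on the part you flag as the ``main obstacle'': the case $|x-x'|\ll |x|\ll 1$ is indeed where the work lies, and your dyadic-annulus rescaling idea is the right intuition, but in writing it up you should be explicit that on $\{|z|\approx r\}$ the map $f$ behaves like $z\mapsto z^\beta$ with $|f'|\approx r^{\beta-1}$ and $|f''|\approx r^{\beta-2}$, so that after the rescaling $z\mapsto rz'$, $w\mapsto r^\beta w'$ one obtains a map whose derivative is uniformly bounded above and below and has uniformly log-Lipschitz modulus (the log coming from $f_1$ via \refthm{KW}); then the half-disk kernel estimate, being scale-invariant, transfers back with the correct $|x-x'|\log|x-x'|^{-1}$ bound. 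This is presumably what \cite{LMW} does in their setting, and making it precise here would complete the argument the paper only gestures at.
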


Next we get a lower bound of $u$ near the corner.

\begin{lem}\label{lem:ulower}
Assume that $\dom$ satisfies \refeq{domain}. 
Let $\beta=\pi/\theta$. \\
(a) Let $0<\theta\le\frac{\pi}{2}$.
If $c_0=\min_{x\in\dom}\omega_0>0$,
then
there exist constants $\dlabel{d:ulower1}>0$ 
and $\clabel{c:ulower1}>0$ depending only on 
$\dom,\normi{\omega_0}$ and $c_0$ such that 
\begin{equation}\label{eq:ulower1}
u_1(x,t)\le-\refc{ulower1} 
\begin{cases}
x_1 
&\qtext{if $0<\theta<\frac{\pi}{2}$}, \\
x_1\log x_1^{-1}
&\qtext{if $\theta=\frac{\pi}{2}$}, 
\end{cases}
\end{equation}
for $x=(x_1,0)\in\bdy,0<x_1<\refd{ulower1}$ and $t>0$. \\
(b)
Let $\frac{\pi}{2}<\theta<\pi$.
If $\omega_0>0$
then
there exist constants $\dlabel{d:ulower2}>0$ 
and $\clabel{c:ulower2}>0$ depending only on 
$\dom$ and $\normi{\omega_0}$ such that 
\begin{equation}\label{eq:ulower2}
u_1(x,t)\le-\refc{ulower2}
x_1^{\beta-1}
\end{equation}
for $x=(x_1,0)\in\bdy,0<x_1<\refd{ulower2}$ and $t>0$. 
\end{lem}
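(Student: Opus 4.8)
The plan is to compute $u_1$ directly from the Biot--Savart law \refeq{u} and the explicit Green function of $\dom$, and to read off from it both the sign and the size of $u_1$ near the corner.

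Fix a boundary point $x=(x_1,0)$ on the edge of the corner along the positive $x_1$-axis, and for $y\in\dom$ write $z=f(x)\in\R$ and $w=f(y)\in\U$, where $f$ is the conformal map of \reflem{conformal}; thus $|z|=|f(x)|\approx x_1^{\beta}$ and $|f'(x)|\approx x_1^{\beta-1}$. Since $f$ is analytic across this edge away from $0$ and maps it into $\R$, on it $f_2\equiv0$, hence at $x$ one has $\partial_{x_2}f_1(x)=0$ and $\partial_{x_2}f_2(x)=|f'(x)|$ (the sign being forced because $f$ maps the interior side of the edge into $\U$). Differentiating the four logarithms defining $G_\dom$, evaluating at $x$, and using $\str{f(y)}=1/\ol w$, the four terms collapse to
\[
\partial_{x_2}G_\dom((x_1,0),y)=\frac{|f'(x)|\,\operatorname{Im}w}{\pi}\Bigl(\frac{1}{|1-zw|^{2}}-\frac{1}{|z-w|^{2}}\Bigr),
\]
and the elementary identity $|1-zw|^{2}-|z-w|^{2}=(1-z^{2})(1-|w|^{2})$ (valid since $z\in\R$) turns this into
\[
\partial_{x_2}G_\dom((x_1,0),y)=-\,\frac{|f'(x)|\,(\operatorname{Im}w)\,(1-z^{2})(1-|w|^{2})}{\pi\,|1-zw|^{2}\,|z-w|^{2}}\ <\ 0\qquad(y\in\dom),
\]
because $|z|<1$ for $x_1$ small, $|w|<1$ and $\operatorname{Im}w>0$. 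As $u_1((x_1,0),t)=\int_\dom \partial_{x_2}G_\dom((x_1,0),y)\,\omega(y,t)\,dy$, this already gives $u_1<0$ and $u_1((x_1,0),t)\to0$ as $x_1\to0$.

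For the quantitative bound, recall that $\omega$ is transported, so $\omega(\cdot,t)\ge c_0$ for all $t$, where $c_0=\min_{\dom}\omega_0>0$ in (a) and $c_0=\min_{\cl\dom}\omega_0>0$ in (b) since there $\omega_0$ is continuous and positive on $\cl\dom$. Because the kernel above is everywhere negative, $\omega(y,t)\ge c_0$ yields, for any measurable $R\subset\dom$,
\[
u_1((x_1,0),t)\ \le\ c_0\int_\dom \partial_{x_2}G_\dom((x_1,0),y)\,dy\ \le\ c_0\int_{R}\partial_{x_2}G_\dom((x_1,0),y)\,dy .
\]
Substituting $w=f(y)$, $dy=|g'(w)|^{2}\,dw$, on $f(R)\subset\U\cap B(0,\refd{conformal})$ the integrand is comparable, by \reflem{conformal} and the smallness of $|z|$ and $|w|$, to $-|f'(x)|\,(\operatorname{Im}w)\,|w|^{2/\beta-2}\,|z-w|^{-2}$. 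In part (a), i.e.\ $\beta=\pi/\theta\ge2$, I take $R=f^{-1}(\U\cap B(0,\rho_1))$ with $\rho_1$ a small constant depending only on $\dom$ (so that $|z|<\rho_1/2$ once $x_1$ is small). Passing to polar coordinates $w=re^{i\phi}$ and integrating in $\phi$ first (which produces the factor $(|z|\,r)^{-1}\log\bigl(\frac{|z|+r}{\bigl|\,|z|-r\,\bigr|}\bigr)$), the integral reduces to a positive constant times
\[
\frac{1}{|z|}\int_0^{\rho_1}r^{2/\beta-1}\log\frac{|z|+r}{\bigl|\,|z|-r\,\bigr|}\,dr\ \gtrsim\
\begin{cases}
|z|^{2/\beta-1}, & \beta>2,\\[2mm]
\log|z|^{-1}, & \beta=2,
\end{cases}
\]
the lower bound being obtained by restricting to $2|z|<r<\rho_1$, where $\log\frac{|z|+r}{r-|z|}\ge|z|/r$. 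Multiplying by $|f'(x)|\approx x_1^{\beta-1}$ and using $|z|\approx x_1^{\beta}$ gives $u_1((x_1,0),t)\le -C^{-1}c_0\,x_1$ when $\theta<\pi/2$ and $u_1((x_1,0),t)\le -C^{-1}c_0\,x_1\log x_1^{-1}$ when $\theta=\pi/2$, which is \refeq{ulower1}. In part (b), i.e.\ $1<\beta<2$, the same model integral is only $O(1)$, so instead I take $R=f^{-1}(S)$ with $S=\{\,\rho_1/2<|w|<\rho_1,\ \pi/4<\arg w<3\pi/4\,\}$, a fixed annular sub-sector of $\U$ bounded away from $0$ and from $\R$; there $\operatorname{Im}w\approx|w|$, $|z-w|\le|z|+|w|$ is bounded, $1-z^{2}$ and $1-|w|^{2}$ are bounded below, and $|g'(w)|^{2}\approx|w|^{2/\beta-2}$ is bounded below, so $\int_{R}\partial_{x_2}G_\dom((x_1,0),y)\,dy\le -C^{-1}|f'(x)|$, whence $u_1\le -C^{-1}c_0\,x_1^{\beta-1}$, which is \refeq{ulower2}. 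All constants and the thresholds for $x_1$ depend only on $\dom$ (through $\rho_1$, $\refc{conformal}$, $\refd{conformal}$, $\beta$) and on $c_0$, and the bounds hold for every $t>0$ since the flow preserves $\min\omega_0$.

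I expect the delicate point to be the ``image part'' of $G_\dom$ — the terms involving $\str{f(y)}$: in case (b) its contribution to $u_1$ is of the same order $x_1^{\beta-1}$ as the leading part, so it cannot be treated as a lower-order error. The resolution is to avoid splitting $G_\dom$ altogether and to use instead the exact factorization $(1-z^{2})(1-|w|^{2})>0$ above, which makes $\partial_{x_2}G_\dom$ manifestly one-signed and thereby legitimizes the $c_0$-comparison; everything else reduces to the one-dimensional model integral, in which the dichotomy $\beta>2$ versus $\beta=2$ is precisely what produces the extra logarithm, mirroring the borderline angle in \refthm{acute}.
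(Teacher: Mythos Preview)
Your computation of the full kernel $\partial_{x_2}G_\dom((x_1,0),y)$ and the factorization $(1-z^2)(1-|w|^2)$ is correct, and using it to get a one-signed kernel is exactly what the paper does in part~(b). For part~(a) this is a genuinely cleaner route than the paper's: there the paper splits $G_\dom=G+G^{*}$, bounds the $G^{*}$-piece and the far-field $G$-piece by $\const\normi{\omega_0}x_1^{\beta-1}$ using \refeq{G*}--\refeq{Gout}, and shows the near-field $G$-piece dominates when $\beta\ge2$. Your argument avoids the error terms altogether and hence the $\normi{\omega_0}$-dependence in the constant.

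There is, however, a real gap in your part~(b). You write ``$c_0=\min_{\cl\dom}\omega_0>0$ in (b) since there $\omega_0$ is continuous and positive on $\cl\dom$'', but the hypothesis is only $\omega_0>0$ on $\dom$, and in the intended application (the proof of \refthm{obtuse}) one takes $\omega_0(x)=|x|$, which vanishes at the corner, so $\inf_{\dom}\omega_0=0$. With $c_0=0$ your inequality $u_1\le c_0\int_R\partial_{x_2}G_\dom\,dy$ is vacuous. Moreover, even though the kernel is negative everywhere, after restricting to your fixed annular sector $R=f^{-1}(S)$ you still need a uniform-in-$t$ lower bound on $\int_R\omega(y,t)\,dy$, and the flow can move most of the mass of $\omega$ out of any fixed small set. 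The paper's fix is to choose $R$ with \emph{small complement}: it takes $R=\dom\sm\dom(r)$ (the complement of a thin boundary strip), on which the kernel is still $\le -C^{-1}x_1^{\beta-1}$ since $f(y)$ stays away from $\partial\U$, and then uses conservation of $\int_\dom\omega$ together with $\int_{\dom(r)}\omega\le\normi{\omega_0}\,m(\dom(r))$ to get $\int_{\dom\sm\dom(r)}\omega(y,t)\,dy\ge\const>0$ for $r$ small. Replacing your sector $S$ by, say, $\{w\in\U:\text{Im}\,w>r,\ 1-|w|>r\}$ and arguing this way repairs your proof.
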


\begin{proof}
Let $f$ be as 
in \reflem{conformal}.
Let $\delta$ be a small positive constant.
Now we consider the particle behavior on the boundary.
Let $x=(x_1,0)\in\bdy,0<x_1<\delta$.
Observe that $f_2(x)=0$, $\pder{f_2}{x_1}(x)=0$ and 
$\pder{f_2}{x_2}(x)\ge\const|x|^{\beta-1}\ge\const x_1^{\beta-1}$. 

(a)
Assume that $0<\theta\le\frac{\pi}{2}$ and 
$c_0=\min_{x\in\dom}\omega_0>0$.
Since \refeq{vorticity}, we see that
$\min_{y\in\dom}\omega(y,t)=c_0$ for any $t>0$.
Let $\ve$
be as 
in the proof of \reflem{uupper}.
By \refeq{G*} and \refeq{Gout},
we have
\[
\begin{split}
u_1(x,t)
&\le-\frac{1}{\pi}\pder{f_2}{x_2}(x)\int_{\dom\cap B(0,\ve)}
\frac{f_2(y)}{|f(x)-f(y)|^2}\omega(y,t)dy
+\const\normi{\omega_0}|x|^{\beta-1} \\
&\le-\const x_1^{\beta-1}\int_{\dom\cap B(0,\ve)}
\frac{f_2(y)}{|f(x)-f(y)|^2}dy
+\const\normi{\omega_0}\const x_1^{\beta-1}.
\end{split}
\]
Let $z=f(x)$ and $\ve'=\refc{conformal}^{-1}\ve^{\beta}$.
The substitution $w=f(y)$ yields
\[
\begin{split}
\int_{\dom\cap B(0,\ve)}
\frac{f_2(y)}{|f(x)-f(y)|^2}dy
&\ge
\const
\int_{\U\cap B(0,\ve')}
\frac{w_2}{|z-w|^2|w|^{2-2/\beta}}dw \\
&\ge 
\const
\int_{2|z|}^{\ve'}\int_{0}^{\pi}
\frac{\sin\theta}{r^{2-2/\beta}}d\theta dr \\
&\ge 
\const
\int_{2|z|}^{\ve'}
r^{-2+2/\beta} dr \\
&\ge 
\const
\begin{cases}
x_1^{-\beta+2} 
&\qtext{if $\beta>2$}, \\
\log x_1^{-1}
&\qtext{if $\beta=2$}, \\
\end{cases}
\end{split}
\]
by \reflem{conformal}\refi{fcompara}.
Therefore we obtain
\[
u_1(x,t)
\le-\const
\begin{cases}
x_1(1-\const x_1^{\beta-2})
&\qtext{if $\beta>2$}, \\
x_1(\log x_1^{-1} -\const)
&\qtext{if $\beta=2$}. \\
\end{cases}
\]
We can choose $\delta>0$ sufficiently small so that 
\refeq{ulower1} holds. 

(b)
Assume $\frac{\pi}{2}<\theta<\pi$ and $\omega_0>0$ on $\dom$.
Since \refeq{vorticity}, we see that
$\omega(y,t)>0$ for any $y\in\dom$ and $t>0$.
Note that $\ol{f(x)}=f(x),f_2(x)=0$ for $x_2=0$
and $\str{f_1}(y)f_2(y)=f_1(y)\str{f_2}(y),f_2(y)=\str{f_2}(y)|f(y)|^2,
\str{f_2}(y)=f_2(y)|\str{f(y)}|^2$.
We obtain that
\[
\begin{split}
u_1(x)
&=\frac{1}{\pi}\pder{f_2}{x_2}(x)\int_{\dom}
\bigg(
-\frac{f_2(y)}{|f(x)-f(y)|^2}+\frac{\str{f_2}(y)}{|f(x)-\str{f(y)}|^2}
\bigg)\omega(y,t)dy \\
&=\frac{1}{\pi}\pder{f_2}{x_2}(x)\int_{\dom}
\frac{-f_1(x)^2(\str{f_2}(y)-f_2(y))-f_2(y)|\str{f(y)}|^2+\str{f_2}(y)|f(y)|^2}
{|f(x)-f(y)|^2|f(x)-\str{f(y)}|^2}
\omega(y,t)dy \\
&=\frac{1}{\pi}\pder{f_2}{x_2}(x)\int_{\dom}
\frac{(\str{f_2}(y)-f_2(y))(f_1(x)^2-1)}{|f(x)-f(y)|^2|f(x)-\str{f(y)}|^2}
\omega(y,t)dy \\
&\le
-\const x_1^{\beta-1}
\int_{\dom}
\frac{\str{f_2}(y)-f_2(y)}{|f(x)-\str{f(y)}|^2}
\omega(y,t)dy,
\end{split}
\]
where $f_1(x)^2-1\le -\const,\str{f_2}(y)-f_2(y)>0,|f(x)-f(y)|\le\const$ and 
$\omega(y,t)>0$ are used in the last inequality.
For sufficiently small $r>0$, we let
\[
\begin{split}
&\dom(r)=\{y\in\dom:\dist(y,\bdy)<r\}.
\end{split}
\]
If $y\in\dom\sm\dom(r)$, 
then
there exists a constant $\const$ such that
\[
\str{f_2}(y)-f_2(y)\ge \frac{1}{\const}
\text{\qquad and \qquad}
|f(x)-\str{f(y)}|\le\const,
\]
since $f(y)$ is away from the origin.
Then we have
\[
\int_{\dom}
\frac{\str{f_2}(y)-f_2(y)}{|f(x)-\str{f(y)}|^2}\omega(y,t)dy
\ge
\int_{\dom\sm\dom(r)}
\frac{\str{f_2}(y)-f_2(y)}{|f(x)-\str{f(y)}|^2}\omega(y,t)dy
\ge
\int_{\dom\sm\dom(r)}
\omega(y,t)dy
\ge\const,
\]
so that
\[
u_1(x)
\le
-\const x_1^{\beta-1}.
\]
Thus \refeq{ulower2} holds.
\end{proof}

\section{Proof of Theorems \ref{thm:acute} and \ref{thm:obtuse}}
In this section we will show 
Theorems \ref{thm:acute} and \ref{thm:obtuse}.

\begin{proof}[Proof of \refthm{acute}]
Let us consider the trajectory 
$\gamma_X(t)=(\gamma_{X}^1(t),\gamma_{X}^2(t))$ 
starting from a point $X\in\cl\dom$.
Let $x=\gamma_X(t)$. 

(a) Let $0<\theta<\frac{\pi}{2}$.
Assume that $\omega_0$ is Lipschitz.
By \reflem{uupper} and \refeq{trajectory},
we have
\[
\Bigg|\frac{d}{dt}|\gamma_{X}(t)|\Bigg|
\le\Bigg|\frac{d\gamma_{X}(t)}{dt}\Bigg|
\le 
\const\normi{\omega_0}|\gamma_{X}(t)|
\qtext{ for all }t>0,
\]
and so

\[
\frac{d}{dt}|\gamma_{X}(t)|
\ge -\const\normi{\omega_0}|\gamma_{X}(t)|
\qtext{ for all }t>0.
\]
By Gronwall's lemma we have
$|\gamma_{X}(t)|\ge |X| e^{-\const\normi{\omega_0}t}$, 
so that $|\gamma_{x}^{\inv}(t)|\le |x| e^{\const\normi{\omega_0}t}$.
Then we see that $\gamma_{0}^{\inv}(t)=0$.
Since $\omega(x,t)=\omega_0(\gamma_x^{\inv}(t))$ by the 2D Euler flows in the Lagrangian form, and $\omega_0$ is Lipschitz,
we obtain
\[
\begin{split}
|\omega(x,t)-\omega(0,t)|
&=|\omega_0(\gamma_x^{\inv}(t))-\omega_0(\gamma_0^{\inv}(t))| \\
&=|\omega_0(\gamma_x^{\inv}(t))-\omega_0(0)| \\
&\le \|\omega_0\|_{\text{Lip}}|\gamma_x^{\inv}(t)| \\
&\le \|\omega_0\|_{\text{Lip}} |x| e^{\const\normi{\omega_0}t}.
\end{split}
\]
Thus \refeq{uppersingle} holds.

Next
we consider an initial data $\omega_0$ defined by
\[
\omega_0(x)=|x|+1.
\]
Let $\refd{ulower1}$ be as in \reflem{ulower}(a).
Due to the boundary condition on $u$, the trajectories 
which start at the boundary stay on the boundary for all times.
We consider
the trajectory starting from a point $X=(X_1,0)\in\bdy$ 
with $0<X_1<\refd{ulower1}$.
Note that $\gamma_{X}^2(t)\equiv 0$ 
for any $t>0$.
By \reflem{ulower} and \refeq{trajectory},
we have
\[
\frac{d\gamma_{X}^1(t)}{dt}
\le 
-\refc{ulower1} \gamma_{X}^1(t)
\qtext{ for all }t>0.
\]
By Gronwall's lemma we have
$\gamma_{X}^1(t)\le X_1 e^{\refc{ulower1}t}$.
We obtain that
\[
\begin{split}
\sup_{x\in\cl\dom,x\neq 0}
\frac{|\omega(\gamma_{X}(t),t)-\omega(0,t)|}{|x|}
&\ge
\frac{|\omega(\gamma_{X}(t),t)-\omega(0,t)|}{|\gamma_{X}(t)|} \\
&=\frac{|\omega_0(X)-\omega_0(0)|}{\gamma_{X}^{1}(t)} \\
&\ge
\frac{|\omega_0(X)-\omega_0(0)|}{X_1} e^{\refc{ulower1}t}
=e^{\refc{ulower1}t}.
\end{split}
\]
Thus \refeq{lowersingle} holds.

(b)
For any $\ve>0$ 
we consider an initial data $\omega_0$ defined by
\[
\omega_0(x)=\min\Bigg\{\frac{|x|}{\ve}+1,2\Bigg\}.
\]
We see that $\normL{\omega_0}=\ve^{-1}$.
Let $\refd{ulower1}$ be as in \reflem{ulower}.
Note that $\refd{ulower1}$ is independent of $\ve$, 
instead depending on $\max\omega_0$ and $\min\omega_0$.
Assume that $\ve<\refd{ulower1}$.
Due to the boundary condition on $u$, the trajectories 
which start at the boundary stay on the boundary for all times.
We consider
the trajectory starting from a point $X=(\ve,0)\in\bdy$.
Note that $\gamma_{X}^2(t)\equiv 0$ 
for any $t>0$.
By \reflem{ulower} and \refeq{trajectory},
we have
\[
\frac{d\gamma_{X}^1(t)}{dt}
\le 
\refc{ulower1} \gamma_{X}^1(t)\log \gamma_{X}^1(t)
\qtext{ for all }t>0.
\]
By Gronwall's lemma we have
$\gamma_{X}^1(t)\le \ve^{\exp(\refc{ulower1}t)}$.
We obtain that
\[
\begin{split}
\sup_{x\in\cl\dom,x\neq 0}
\frac{|\omega(x,t)-\omega(0,t)|}{|x|}
&\ge
\frac{|\omega(\gamma_{X}(t),t)-\omega(0,t)|}{|\gamma_{X}(t)|} \\
&=\frac{|\omega_0(X)-\omega_0(0)|}{\gamma_{X}^{1}(t)} \\
&\ge
\ve^{-\exp(\refc{ulower1}t)} 
=\normL{\omega_0}^{\exp(\refc{ulower1}t)}.
\end{split}
\]
Thus \refeq{lowerdouble} holds.

(c) Let $0<\theta\le\frac{\pi}{2}$.
Assume that $\dom$ is $C^{1,1}$ except at $0\in\bdy$
and $\omega_0$ is Lipschitz.
Let $\gamma_Y(t)$ 
starting from a point $Y\in\cl\dom$.
Let $y=\gamma_Y(t)$. 
By \reflem{logLip} and \refeq{trajectory},
we have
\[
\Bigg|\frac{d}{dt}|\gamma_{X}(t)-\gamma_{Y}(t)|\Bigg|
\le\Bigg|\frac{d}{dt}\big(\gamma_{X}(t)-\gamma_{Y}(t)\big)\Bigg|
\le 
\const\normi{\omega_0}|\gamma_{X}(t)-\gamma_{Y}(t)|
\log |\gamma_{X}(t)-\gamma_{Y}(t)|^{-1}
\]
for all $t>0$,
and so
\[
\frac{d}{dt}|\gamma_{X}(t)-\gamma_{Y}(t)|
\ge \const\normi{\omega_0}|\gamma_{X}(t)-\gamma_{Y}(t)|
\log |\gamma_{X}(t)-\gamma_{Y}(t)|
\qtext{ for all }t>0.
\]
By Gronwall's lemma we have
\[
|\gamma_{X}(t)-\gamma_{Y}(t)|\ge |X-Y|^{\exp(\const\normi{\omega_0}t)}.
\] 
Thus we obtain that
\[
|\gamma_{x}^{\inv}(t)-\gamma_{y}^{\inv}(t)|
\le |x-y|^{^{\exp(-\const\normi{\omega_0}t)}}.
\]
Then we see that $\gamma_{0}^{\inv}(t)=0$.
Since $\omega(x,t)=\omega_0(\gamma_x^{\inv}(t))$ by the 2D Euler flows in the Lagrangian form, and $\omega_0$ is Lipschitz,
we obtain
\[
\begin{split}
|\omega(x,t)-\omega(y,t)|
&=|\omega_0(\gamma_x^{\inv}(t))-\omega_0(\gamma_y^{\inv}(t))| \\
&=|\omega_0(\gamma_x^{\inv}(t))-\omega_0(\gamma_y^{\inv}(t))| \\
&\le \|\omega_0\|_{\text{Lip}}|\gamma_x^{\inv}(t)-\gamma_y^{\inv}(t)| \\
&\le \|\omega_0\|_{\text{Lip}}|x-y|^{^{\exp(-\const\normi{\omega_0}t)}}.
\end{split}
\]
Thus \refeq{logLip} holds.
\end{proof}

\begin{proof}[Proof of \refthm{obtuse}]
Firstly we assume that $\frac{\pi}{2}<\theta<\pi$.
Let us consider the trajectory $\gamma_X(t)=(\gamma_{X}^1(t),\gamma_{X}^2(t))$ 
starting from a point $X\in\dom$.
Let $x=\gamma_X(t)$.
Now we consider a continuous initial data $\omega_0$ defined by
\[
\omega_0(x)=|x|.
\]
Let $\refd{ulower2}$ be as in \reflem{ulower}.
Due to the boundary condition on $u$, the trajectories 
which start at the boundary stay on the boundary for all times.
We consider
the trajectory starting from a point $X=(X_1,0)\in\bdy$ 
with $0<X_1<\refd{ulower2}$.
By \reflem{ulower} and \refeq{trajectory},
we have
\[
\frac{d\gamma_{X}^1(t)}{dt}
\le 
-\refc{ulower2} \Big(\gamma_{X}^1(t)\Big)^{\beta-1}
\qtext{ for all }t>0.
\]
By Gronwall's lemma we have
\[
\gamma_{X}^1(t)\le\Big(X_1^{2-\beta}-(2-\beta)\refc{ulower2} t )\Big)^{1/(2-\beta)}.
\]
Hence there exists $T_X\le X_1^{2-\beta}/(2-\beta)\refc{ulower2}$ 
such that $\gamma_{X}(T_X)=0$.
Note that $T_{X}\to 0$ as $X_1\to 0$.
On the other hand, \reflem{uupper} implies that $u(0,t)=0$, 
so $\gamma_{0}(t)\equiv 0$ is one of solutions of \refeq{trajectory}.
It follows from \refeq{vorticity} that
\[
\begin{split}
&\omega(\gamma_{0}(T_X),T_X)=\omega_0(0)=0, \\
&\omega(\gamma_{X}(T_X),T_X)=\omega_0(X)=|X|\neq 0.
\end{split}
\]
Since $\gamma_{X}(T_X)=\gamma_{0}(T_X)=0$, 
we see that
$\omega(\cdot,t)$ loses continuity
at $t=T_X$.

Next we assume that $\pi<\theta<2\pi$ 
and $\dom$ is symmetric with respect to the corner.
Without loss of generality, by rotation, we may assume that
$\bdy$ has a corner of angle $\theta$ at $0$ 
with $\theta_0=(\pi-\theta)/2$ in \refdefn{corner}.
Note that $\dom$ is symmetric with respect to the $x_2$-axis.
Now we consider a continuous initial data $\omega_0$ defined by
\begin{equation}\label{eq:pacman}
\omega_0(x)=x_1.
\end{equation}
Let $\wtil{\dom}=\{x\in\dom:x_1>0\}$.
Note that $\wtil{\dom}$ has a corner 
of angle $\theta/2$.
Define the function $\wtil{\omega_0}$ on $\wtil{\dom}$ 
by $\wtil{\omega_0}=\omega_0|_{\wtil{\dom}}$.
In a way similar to the above argument, 
there is a solution $\wtil{\omega}$ 
to the Euler equations \refeq{Euler} on $\wtil{\dom}$ 
such that $\wtil{\omega}(t)$ instantaneously loses continuity in space.
Now we define the function $\omega$ on $\dom$ 
by $\omega(\til{x})=-\omega(x)$ for $x\in\dom$. 
Then $\omega$ is one of solutions 
to the Euler equations \refeq{Euler} in $\dom$ with the initial data 
\refeq{pacman} 
and $\omega(t)$ instantaneously loses continuity in space.
\end{proof}

%%%%%%%%%%%%%%%%
%%%%%%%%%%%%%%%%%%%%%%%%%%%%%%
\vspace{0.5cm}
\noindent
{\bf Acknowledgments.}\ 
H.M. was partially supported by  Grant-in-Aid for Young Scientists (A), No.25707005 and
T.Y. was partially supported by Grant-in-Aid for Young Scientists (B),\\
 No. 25870004,
Japan Society for the Promotion of Science.
I.T. and T.Y. were supported by the ``Program to Promote the Tenure Track System" of the Ministry of Education, Culture, Sports, Science and Technology.  
We thank Professor Yasunori Maekawa for valuable comments.
We thank Professor Hisashi Okamoto for letting us know the Kraichnan-Leith-Batchelor theory in turbulence analysis. The theory must be related to our mathematical research, and clarifying these relation is  our future work.

%%%%%%%%%%%%%%%%
%%%%%%%%%%%%%%%%%%%%%%%%%%%%%%%%%%

\providecommand{\bysame}{\leavevmode\hbox to3em{\hrulefill}\thinspace}
\providecommand{\MR}{\relax\ifhmode\unskip\space\fi MR }
% \MRhref is called by the amsart/book/proc definition of \MR.
\providecommand{\MRhref}[2]{%
  \href{http://www.ams.org/mathscinet-getitem?mr=#1}{#2}
}
\providecommand{\href}[2]{#2}

\end{document}